\documentclass{article}
\usepackage[T1]{fontenc}
\usepackage[latin9]{inputenc}
\usepackage{amsmath,amsthm} 
\usepackage{graphicx}
\usepackage{amssymb}
\usepackage{enumitem}
\usepackage{natbib} 
\usepackage[letterpaper]{geometry} 
\usepackage{tikz}
\usetikzlibrary{patterns,automata,arrows,shapes,snakes,topaths,trees,backgrounds,positioning,through,calc,arrows.meta}
\usepackage{authblk}
\usepackage{setspace}
\usepackage{multicol}
\usepackage{stmaryrd}  
\usepackage{comment}
\usepackage{hyperref}
\hypersetup{colorlinks=true, citecolor=blue, linkcolor=blue}  

\usepackage{xcolor}
\definecolor{medgreen}{rgb}{0.0, 0.75, 0.0}

\usepackage[capitalize]{cleveref}

\usepackage{mathrsfs}

\theoremstyle{definition}
\newtheorem{theorem}{Theorem}[section]

\newtheorem{proposition}[theorem]{Proposition} 
\newtheorem{corollary}[theorem]{Corollary}
\newtheorem{definition}[theorem]{Definition}

\newtheorem{fact}[theorem]{Fact}

\newtheorem{remark}[theorem]{Remark}  
\newtheorem{problem}[theorem]{Problem}

\setcitestyle{round}

\begin{document} 

\title{A new proof of Funayama's theorem}

\author[1]{Guram Bezhanishvili}
\author[2]{Wesley H. Holliday}
\affil[1]{\small New Mexico State University}
\affil[2]{\small University of California, Berkeley}

\date{March 24, 2026}

\maketitle

\renewcommand{\thefootnote}{}
\footnotetext{\textbf{Keywords:} join and meet infinite distributivity; exact meets and joins; embeddings into complete Boolean algebras.

\;\;\;\textbf{MSC2020:} 06A06, 06A12, 06B23, 06D10, 06D22, 06E10.}
\renewcommand{\thefootnote}{\arabic{footnote}}

\begin{abstract} Funayama proved that a lattice embeds into a complete Boolean algebra in such a way that all existing joins and meets are preserved if and only if the lattice satisfies the join-infinite and meet-infinite distributive laws. There are several proofs of this classic result in the literature. In this note, we provide a new and purely order-theoretic proof of Funayama's theorem, as well as of generalizations of the theorem.
\end{abstract}

\section{Introduction}\label{Intro}

It is a consequence of classic results of Birkhoff \citeyearpar{Birkhoff1937} and Stone \citeyearpar{Stone1938} that for any distributive lattice $L$, there is an embedding of $L$ into a complete Boolean algebra that preserves finite meets and joins, but not necessarily infinite meets and joins. The question naturally arose of when there is a \textit{complete embedding} of $L$ into a complete Boolean algebra, that is, an embedding that preserves all existing meets and joins. Not every distributive lattice can be completely embedded into a complete Boolean algebra since, as observed by von Neumann \citeyearpar{Vonneumann1936}, complete Boolean algebras satisfy the join-infinite and meet-infinite distributive laws:

\begin{definition}
A lattice $L$ satisfies \textit{join-infinite distributivity} (JID) if for each existing join $\bigvee S$ and $a\in L$, the join $\bigvee \{a \wedge s \mid s\in S\}$ also exists and $$a \wedge \bigvee S = \bigvee \{a \wedge s \mid s\in S\}.$$ 
\textit{Meet-infinite distributivity} (MID) is defined dually.     
\end{definition}
\noindent 

While not all distributive lattices satisfy JID or MID, Funayama \citeyearpar[Thm.~6]{Funayama1959} proved that satisfying these principles is sufficient for the existence of a complete embedding into a complete Boolean algebra. 

\begin{theorem}[Funayama 1959]\label{Funayama} For any  lattice $L$, there is a complete embedding of $L$ into a complete Boolean algebra iff $L$ satisfies JID and MID. 
\end{theorem}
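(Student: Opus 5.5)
The easy direction is immediate. If $e\colon L\to B$ is a complete embedding into a complete Boolean algebra, take an existing join $\bigvee S$ in $L$ and $a\in L$. Since $B$ satisfies JID, we have
$$e(a\wedge \textstyle\bigvee S)=e(a)\wedge\bigvee e[S]=\bigvee\{e(a\wedge s)\mid s\in S\}.$$
Now let $u\in L$ be any upper bound of $\{a\wedge s\mid s\in S\}$. Then $e(u)$ is an upper bound of the $e(a\wedge s)$, so $e(a\wedge\bigvee S)\le e(u)$. Because $e$ is an order embedding, $a\wedge\bigvee S\le u$. Hence $a\wedge\bigvee S$ is the least upper bound in $L$ of $\{a\wedge s\mid s\in S\}$, which gives JID. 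MID follows dually.

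For the converse, I would give a purely order-theoretic construction. Let $L$ satisfy JID and MID; $L$ is distributive, since finite distributivity is a special case. Let $P$ be the poset of proper prime filters of $L$, or alternatively the set of pairs $(F,I)$ of a filter and a disjoint ideal. Ordinary prime filters will not do, because $a\mapsto\{x\mid a\in x\}$ fails to preserve infinite joins. So I would instead take as points the pairs $(a,b)$ of elements of $L$ with $a\not\le b$, ordered by $(a,b)\le(a',b')$ iff $a\le a'$ and $b'\le b$. Equip this poset with the topology of downsets, or use the regular-open algebra $\mathrm{RO}(P)$ of its Alexandrov topology. This is a complete Boolean algebra. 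Map $x\in L$ to
$$e(x)=\{(a,b)\mid a\le x\}$$
and then take its regularization.

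The checks, in order, are as follows:
\begin{enumerate}
\item $e(x)$ is regular open. The natural candidate for the Boolean complement is $\{(a,b)\mid x\le b\}$, because any $(a,b)$ refines to $(a\wedge x,b)$ or $(a,x\vee b)$ whenever these remain nondegenerate.
\item $e$ is an order embedding. If $x\not\le y$, the point $(x,y)$ lies in $e(x)\setminus e(y)$.
\item $e$ preserves finite meets, which is straightforward.
\item $e$ preserves existing joins, in the sense that $e(\bigvee S)$ is the regularization of $\bigcup e(s)$. This means that every point $(a,b)$ with $a\le\bigvee S$ has a refinement lying below some $s$. Given such $(a,b)$ with $a\not\le b$, JID gives $a=a\wedge\bigvee S=\bigvee_s(a\wedge s)$. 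Then some $a\wedge s\not\le b$, since otherwise $a\le b$. So $(a\wedge s,b)$ is a refinement in $e(s)$.
\item $e$ preserves existing meets, which is the dual argument using MID applied to the complement description $\{(a,b)\mid x\le b\}$. Concretely, if $\bigwedge S\le b$ fails we need to refine $b$ upward. MID gives $b=\bigwedge_s(b\vee s)$, and some $b\vee s\not\ge a$, which yields a refinement $(a,b\vee s)$.
\end{enumerate}

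The main obstacle is step 1 together with the interaction with the topology. I need to choose the preorder, and hence the notion of refinement, so that $e(x)$ is exactly regular open and its complement in $\mathrm{RO}(P)$ is exactly the set of points with $x\le b$. This requires that every point compatible with neither side actually refines into one of them. Refining $(a,b)$ to $(a\wedge x,b)$ works unless $a\wedge x\le b$. In that case I try $(a,b\vee x)$, and I must show $a\not\le b\vee x$. If both fail, distributivity gives
$$a=a\wedge(b\vee x)=(a\wedge b)\vee(a\wedge x)\le b,$$
a contradiction. So finite distributivity handles step 1, and the infinite laws are used only in steps 4 and 5. I expect the remaining care to go into showing that the joins and meets computed in $\mathrm{RO}(P)$ coincide with the regularizations described above.
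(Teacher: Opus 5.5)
Your proof is correct in substance and uses the paper's construction. It has the same poset $X_L$ of pairs $(a,b)$ with $a\not\le b$, the same order, and the same algebra $\mathbf{B}L$ of regular opens of the downset topology. It also has the same map, since $\{(a,b)\mid a\le x\}={\Downarrow}(x,0)$; your form even yields $\varnothing$ at $x=0$ with no special case. Your join step, refining $(a,b)$ to $(a\wedge s,b)$ via JID, is the argument of Theorem~\ref{FMS}, and your easy direction is fine.

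The real difference is the meet step. The paper proves $\bigcap_{s\in S}e(s)\subseteq e(\bigwedge S)$ directly, using distributivity and MID together in the computation $y'=(y'\vee x')\wedge\bigwedge_{s\in S}(y'\vee s)$. You spend finite distributivity once, on a complement lemma, and then get meets as the exact order-dual of joins: refine $(a,b)$ to $(a,b\vee s)$ and apply De Morgan in $\mathbf{B}L$. In effect, for distributive $L$ your lemma gives $f[\neg e(x)]=e^\partial(x)$. So $e$ and $e^\partial$ correspond under the isomorphism $U\mapsto f[\neg U]$ of $\mathbf{B}L$ onto $(\mathbf{B}(L^\partial))^\partial$ (cf.\ Remark~\ref{Duals}). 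Your route thus derives Theorem~\ref{FD} by gluing Theorems~\ref{FMS} and~\ref{FJS}. That makes the JID/MID symmetry transparent, while the paper's direct argument never has to identify complements.

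However, step~1 and your ``main obstacle'' paragraph assert something false. With this order, neither $U_x=\{(a,b)\mid a\le x\}$ nor $V_x=\{(a,b)\mid x\le b\}$ need be regular open. Take the four-element Boolean algebra with atoms $p,q$. The only refinements of $(1,q)$ are $(1,q)$ and $(p,q)$, and both lie above $(p,q)\in U_p$, so $(1,q)\in\Box\Diamond U_p\setminus U_p$. Similarly $(q,0)\in\Box\Diamond V_p\setminus V_p$. For distributive $L$ one in fact has $\Box\Diamond U_x=\{(a,b)\mid a\le x\vee b\}$ and $\Box\Diamond V_x=\{(a,b)\mid a\wedge x\le b\}$.

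No different preorder is needed, though. What your distributivity argument actually shows is that $U_x$ and $V_x$ are disjoint downsets with dense union. That gives exactly that $\Box\Diamond U_x$ and $\Box\Diamond V_x$ are complements in $\mathbf{B}L$, and this is all the later steps use. Step~4 only involves regularizations, and step~5 only needs $\neg e(x)=\Box\Diamond V_x$ together with $\Box\Diamond\bigcup_s\Box\Diamond V_s=\Box\Diamond\bigcup_s V_s$.

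Also, in step~5 the points to refine are those with $\bigwedge S\le b$ (``holds'', not ``fails''). That is what makes MID give $b=b\vee\bigwedge S=\bigwedge_{s\in S}(b\vee s)$.
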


\noindent Funayama's proof is complicated. Gr\"{a}tzer \citeyearpar[p.~109]{Gratzer1971} gave a more accessible proof of \cref{Funayama} under the assumption that $L$ is complete, though the assumption of completeness can be dropped (see \citealt{Bezhanishvili2013}). The basic idea of the proof is that the Boolean envelope $\mathscr{B}L$ of $L$ preserves all existing joins in $L$ iff $L$ satisfies JID, and it preserves all existing meets iff $L$ satisfies MID. Taking the MacNeille completion $\mathfrak{M}\mathscr{B}L$ of $\mathscr{B}L$ then completes the proof since the MacNeille completion  preserves all existing joins and meets. 

Inspection of Gr\"{a}tzer's proof shows that JID (resp.~MID) is equivalent to the existence of a $(\wedge,\bigvee)$-embedding (resp.~$(\vee,\bigwedge)$-embedding), i.e., an embedding that preserves all finite meets and existing joins (resp.~all finite joins and existing meets), into a complete Boolean algebra. Thus, Funayama's theorem can be rephrased as follows.

\begin{theorem}\label{JIDMID} For any lattice $L$:
\begin{enumerate} 
\item\label{JIDMID1} there is a $(\wedge,\bigvee)$-embedding of $L$ into a complete Boolean algebra iff $L$ satisfies JID; 
\item\label{JIDMID2} there is a $(\vee,\bigwedge)$-embedding of $L$ into a complete Boolean algebra iff $L$ satisfies MID;
\item\label{JIDMID3} there is a complete embedding of $L$ into a complete Boolean algebra iff $L$ satisfies JID and MID.
\end{enumerate}
\end{theorem}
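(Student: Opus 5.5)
The easy directions of all three items are handled together. If $e\colon L\to B$ is a $(\wedge,\bigvee)$-embedding into a complete Boolean algebra, then $B$ satisfies JID by von Neumann's observation. Given an existing join $\bigvee S$ in $L$ and $a\in L$, we have
$$e(a\wedge \bigvee S)=e(a)\wedge\bigvee e[S]=\bigvee\{e(a\wedge s)\mid s\in S\}.$$
Now let $u$ be any upper bound of $\{a\wedge s\mid s\in S\}$ in $L$. Then $e(a\wedge\bigvee S)\le e(u)$, and since $e$ is an order-embedding, $a\wedge\bigvee S\le u$. So $a\wedge\bigvee S$ is the least upper bound of $\{a\wedge s \mid s\in S\}$ in $L$, which is JID. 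The MID case is dual, and item (3) combines the two.

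For the hard direction I want a purely order-theoretic construction rather than the Boolean envelope. I will work with the poset $L$ and a topology of ``regular open'' type on it. For (1), assume JID. Let $B$ be the complete Boolean algebra of sets $U\subseteq L$ that are fixpoints of a suitable nucleus-like closure operator. The candidate is the regular open algebra of the downset (Alexandroff) topology, modified so that existing joins are respected. Concretely, call a downset $D$ \emph{join-closed} if $\bigvee S\in D$ whenever $S\subseteq D$ and $\bigvee S$ exists. Let $B$ be the family of join-closed downsets $D$ that are also \emph{regular}, meaning: if $x\notin D$, then there is $0\neq y\le x$ (or more generally some witness below $x$) with ${\downarrow}y$ disjoint from $D$. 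I map $a\mapsto{\downarrow}a$.

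The steps, in order, are as follows.
\begin{enumerate}
\item Define the closure $c(D)$ as the set of $x$ such that every nonempty piece below $x$ meets $D$. This is a Glivenko-type double negation, taken relative to the ``orthogonality'' $x\perp y$ iff $x\wedge y$ is bottom.
\item Verify that the fixpoints of $c$ form a complete Boolean algebra. This is a standard polarity/double-negation argument, but since $L$ may lack a bottom I will adjoin one.
\item Check that ${\downarrow}a$ is a fixpoint. This uses distributivity: if $x\not\le a$, I need a nonzero element below $x$ that is disjoint from $a$. Ordinary distributivity does \emph{not} give complements, so pure ${\downarrow}$ with orthogonality fails. I therefore expect to replace orthogonality by a relation on pairs (filter/ideal, or ``$x$ versus $x\wedge a$''), as in a Vietoris/Priestley-style poset $P$ of pairs $(x,y)$ with $y\not\ge x$, or, more simply, prime-filter-free pairs $(a,b)$ with $a\not\le b$.
\end{enumerate}

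So I will take $P=\{(a,b)\in L^2 \mid a\not\le b\}$, ordered by $(a,b)\le(a',b')$ iff $a\le a'$ and $b\ge b'$. I map $a\mapsto \widehat a=\{(x,y)\mid x\le a\}$ and take $B=\mathrm{RO}$ of the downset topology of $P$. Regular opens of an Alexandroff space always form a complete Boolean algebra, so that part is free. The map is injective: if $a\not\le b$, then $(a,b)\in\widehat a$, and every refinement of $(a,b)$ escapes $\widehat b$, since refining keeps the second coordinate $\ge b$ and the first coordinate $\not\le$ it. Finite meets are preserved as intersections once $\widehat a$ is shown regular, which uses only the lattice structure. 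Binary joins need distributivity: a point refining nothing outside $\widehat a\cup\widehat b$ must lie in $\widehat{a\vee b}$, via the refinement $(x\wedge a, y)$ or $(x, y\vee a)$.

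The key step, and the main obstacle, is showing that an existing $\bigvee S$ maps to $\mathrm{int}\,\mathrm{cl}\bigcup_s\widehat s$. Take $(x,y)$ with $x\le\bigvee S$. Then $x=\bigvee_s(x\wedge s)$ by JID, so some $x\wedge s\not\le y$, giving a refinement $(x\wedge s,y)$ inside $\widehat s$. This density argument is where JID enters. Conversely, if $(x,y)\notin\widehat{\bigvee S}$, then $(x,\,y\vee\bigvee S)$ is a refinement, which requires the join $y\vee\bigvee S$ and finite distributivity. The delicate part is making these refinements work uniformly with the regular-open closure. For (2) the construction is dualized: use $a\mapsto\{(x,y)\mid y\not\ge a\}$-style sets, or apply (1) to $L^{\mathrm{op}}$ and compose with complementation. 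For (3) I must use a single space: I will verify that the same $P$ and map also convert existing meets into intersections followed by the interior operator, using MID in the symmetric refinement $(x,\,y\vee m)$. I expect this combination, handling joins and meets simultaneously in one regular-open algebra, to be the hardest verification.
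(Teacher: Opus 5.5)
\textbf{Gap: the map $a\mapsto\widehat a$ does not land in $B$.} Your easy direction is correct. The construction you settle on is exactly the paper's $X_L$ and $\mathbf{B}L$: the pairs $(a,b)$ with $a\not\le b$, with the regular open algebra of the downset topology. The problem is the map. You send $a$ to $\widehat a=\{(x,y)\mid x\le a\}$ and claim $\widehat a$ is regular open using only the lattice structure. This already fails in the four-element Boolean algebra $\{0,p,q,1\}$:
\begin{itemize}
\item the pair $(1,q)$ is not in $\widehat p$;
\item its only refinements are $(1,q)$ and $(p,q)$, and both refine to $(p,q)\in\widehat p$;
\item so $(1,q)\in\mathrm{int}\,\mathrm{cl}\,\widehat p\setminus\widehat p$.
\end{itemize}
Consequently the identity you set out to prove, $\widehat{\bigvee S}=\mathrm{int}\,\mathrm{cl}\bigcup_{s}\widehat s$, is false for $S=\{p\}$. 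Your binary-join claim fails for the same reason (take $a=p$, $b=0$). Concretely, your ``conversely'' step breaks because $(x,\,y\vee\bigvee S)$ need not be a pair: with $(x,y)=(1,q)$ and $\bigvee S=p$ you get $(1,1)$.

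\textbf{Repair.} Put $e(a)=\Box\Diamond\widehat a$, which is the paper's $\Box\Diamond{\Downarrow}(a,0)$ after bounding $L$. Then:
\begin{itemize}
\item Injectivity survives: your argument shows $(a,b)\in\widehat a\subseteq e(a)$ and $(a,b)\notin\Diamond\widehat b\supseteq e(b)$.
\item Finite meets follow from $\widehat{a\wedge b}=\widehat a\cap\widehat b$ and the fact that $\Box\Diamond$ commutes with finite intersections of downsets.
\item For joins, $\bigsqcup_s e(s)=\Box\Diamond\bigcup_s\widehat s\subseteq e(\bigvee S)$ is just monotonicity, so the ``conversely'' step is unnecessary. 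Your JID density argument, plus the fact that $\widehat{\bigvee S}$ is a downset, gives $\widehat{\bigvee S}\subseteq\Box\Diamond\bigcup_s\widehat s$, hence the other inclusion.
\end{itemize}

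\textbf{Item (3).} With the corrected map, meet preservation is no longer the triviality $\widehat{\bigwedge S}=\bigcap_s\widehat s$, and your sketch omits the argument. Here is what is needed (the paper's argument). JID gives distributivity. Let $m=\bigwedge S$, and suppose $(x,y)\in\bigcap_s e(s)$ but some refinement $(x',y')$ of it has no refinement in $\widehat m$.
\begin{itemize}
\item Then $x'\wedge m\le y'$.
\item By distributivity and MID, $y'=(y'\vee x')\wedge\bigwedge_s(y'\vee s)$.
\item Since $x'\not\le y'$, some $s$ has $x'\not\le y'\vee s$.
\item Then $(x',y'\vee s)$ is a refinement of $(x,y)$ with no refinement in $\widehat s$, contradicting $(x,y)\in e(s)$.
\end{itemize}
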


In pointfree topology, the main objects of interest are complete lattices satisfying JID, called \textit{frames} or \emph{locales} (see, e.g.,  \citealt{Picado2012}). Assuming $L$ is complete, the right-to-left direction of \cref{JIDMID}.\ref{JIDMID1} says that every frame is isomorphic to a subframe of a Boolean frame. Isbell~\citeyearpar{Isbell1972} gave an alternative proof of this result by taking the assembly frame $\mathscr{N} L$ of $L$, then considering its Booleanization $\mathfrak {B} \mathscr{N} L$, and finally observing that the $(\wedge,\bigvee)$-embedding $L \to \mathscr{N} L$ factors through $\mathfrak{B} \mathscr{N} L$ (also see \citealt{Johnstone1982}, p.~53).

While the proofs of Gr\"{a}tzer and Isbell are different, if $L$ is complete, then $\mathfrak{M}\mathscr{B}L$ and $\mathfrak{B} \mathscr{N} L$ are isomorphic (\citealt[Thm.~4.13]{Bezhanishvili2013}). The proof of the isomorphism uses Priestley duality, which yields another direct proof of Funayama's theorem (\citealt[Remark 4.7]{Bezhanishvili2013}).

In this paper, we provide yet another proof of Funayama's theorem. The main features of this new proof are the following:
\begin{itemize}
\item The proof is purely order-theoretic. Inspired by the representation of complete lattices by Allwein and MacCaull \citeyearpar{Allwein2001}, we construct from a given poset $P$ a partial ordering of certain pairs of elements from $P$ and then take the regular open algebra of the poset of pairs, which is a Boolean algebra by a classic result of Tarski \citeyearpar{Tarski1937}.
\item Our approach allows generalizing \cref{JIDMID}.\ref{JIDMID1} to arbitrary meet semilattices and \cref{JIDMID}.\ref{JIDMID2} to arbitrary join semilattices, using the concept of exact joins and meets (see Definition~\ref{ExactDef}), as well as establishing Funayama's original theorem.
\item The proof does not require the use of Boolean envelopes and MacNeille completions (Gr\"{a}tzer) or frame assemblies and Booleanizations (Isbell). It is also choice-free, unlike the proof using Priestley duality, which depends on the Prime Ideal Theorem. 
\item On the other hand, the complete Boolean algebra we construct is isomorphic to that of Gr\"{a}tzer if we start with a distributive lattice, as well as that of Isbell if we start with a frame.
\item Our investigation raises two natural problems (Problems \ref{1stProblem} and \ref{2ndProblem}) that we leave for future work.
\end{itemize}

\section{Construction}

Before introducing our main construction, we recall the notion of {\em distributive} joins and meets (\citealt{MacNeille1937}). We follow Ball \citeyearpar{Ball1984} in calling these {\em exact}, and we note that Bruns and Lakser \citeyearpar{BrunsLakser1970} call a set $S$  \textit{admissible} if it has an exact join.

\begin{definition}\label{ExactDef}
A subset $S$ of a meet semilattice $M$ \textit{has an exact join} if $\bigvee S$ exists in $M$ and for each $a\in M$, $\bigvee \{a\wedge s\mid s \in S\}$ exists in $M$ and $$a \wedge \bigvee S = \bigvee \{a\wedge s\mid s \in S\}.$$ 
Dually, a subset $S$ of a join semilattice $J$ \textit{has an exact meet} if $\bigwedge S$ exists in $J$ and for each $a\in J$, ${\bigwedge \{a\vee s\mid s \in S\}}$ exists in $J$ and $$a \vee \bigwedge S = \bigwedge \{a\vee s\mid s \in S\}.$$ 
Given meet semilattices $M,M'$ and  an embedding $e:M \to M'$, we say that $e$ \textit{preserves all existing exact joins} if for any subset $S\subseteq M$, if $S$ has an exact join, then \[e\left(\bigvee S\right)= \bigvee \{e(s)\mid s\in S\}.\] 
For an embedding $e$ of join semilattices to \textit{preserve all existing exact meets} is defined dually.\end{definition}

We now prove the first of our main results.

\begin{theorem}[Funayama for meet semilattices] \label{FMS} For any poset $P$, there is an embedding $e$ of $P$ into a complete Boolean algebra such that:
\begin{enumerate}
    \item\label{MainThm1} $e$ preserves all existing finite meets;
\item\label{MainThm2} if $P$ is a meet semilattice, then $e$ preserves all existing exact joins.
\end{enumerate}
\end{theorem}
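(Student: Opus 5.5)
The plan is to realize the complete Boolean algebra as Tarski's regular open algebra of a poset of pairs, in the spirit of Allwein and MacCaull. Let $X=\{(a,b)\in P\times P\mid a\not\leq b\}$, ordered by $(a',b')\sqsubseteq(a,b)$ iff $a'\leq a$ and $b\leq b'$. Equip $X$ with the Alexandroff topology whose open sets are the $\sqsubseteq$-downsets. A downset $U$ is then regular open iff for every $x\notin U$ there is $y\sqsubseteq x$ with ${\downarrow}y\cap U=\varnothing$. By Tarski, the regular open sets form a complete Boolean algebra $\mathrm{RO}(X)$. In it, meets are intersections, and $\bigvee_i U_i=\mathrm{int}\,\mathrm{cl}\,\bigcup_i U_i$. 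Concretely, $x\in\bigvee_i U_i$ iff every $y\sqsubseteq x$ has some $z\sqsubseteq y$ lying in some $U_i$.

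Define $e(p)=\{(a,b)\in X\mid a\leq p\}$. This set is clearly a downset. It is regular: if $(a,b)\notin e(p)$, then $a\not\leq p$, so $(a,p)\in X$ and $(a,p)\sqsubseteq(a,b)$. Any $(a',b')\sqsubseteq(a,p)$ has $p\leq b'$ and $a'\not\leq b'$, hence $a'\not\leq p$, so ${\downarrow}(a,p)\cap e(p)=\varnothing$. The map $e$ is monotone. It is also order-reflecting: if $p\not\leq q$, then $(p,q)\in e(p)\setminus e(q)$. Hence $e$ is an order embedding.

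For part \ref{MainThm1}, if $p\wedge q$ exists in $P$, then $e(p\wedge q)=e(p)\cap e(q)$, because $a\leq p\wedge q$ iff $a\leq p$ and $a\leq q$. Similarly, if $P$ has a top element $1$, then $e(1)=X$, which is the top of $\mathrm{RO}(X)$. So $e$ preserves all existing finite meets.

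For part \ref{MainThm2}, let $P$ be a meet semilattice and let $S$ have an exact join. Monotonicity gives $\bigvee_{s\in S}e(s)\subseteq e(\bigvee S)$. For the converse, take $(a,b)\in e(\bigvee S)$ and an arbitrary $(a',b')\sqsubseteq(a,b)$. Then $a'\leq a\leq\bigvee S$, so exactness yields
$$a'=a'\wedge\bigvee S=\bigvee\{a'\wedge s\mid s\in S\}.$$
Since $a'\not\leq b'$, not all $a'\wedge s$ can lie below $b'$, so some $s$ satisfies $a'\wedge s\not\leq b'$. Then $(a'\wedge s,b')\in X$, it lies $\sqsubseteq(a',b')$, and it belongs to $e(s)$. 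By the description of joins in $\mathrm{RO}(X)$, we get $(a,b)\in\bigvee_{s\in S}e(s)$.

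I expect the only delicate step to be choosing the order on pairs, namely reversing the second coordinate, so that both the regularity of $e(p)$ and the join argument work. After that, exactness is used exactly once, in the join computation above. The case $S=\varnothing$ (a bottom element $0$) is covered as well: $e(0)=\varnothing$, since $a\leq 0$ forces $a\leq b$ for every $b$.
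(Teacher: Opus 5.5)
There is a genuine gap where you claim that $E(p)=\{(a,b)\in X\mid a\leq p\}$ is regular open. Your witness fails: $(a,p)\sqsubseteq(a,b)$ requires $b\leq p$, and nothing guarantees that. The claim itself is false.

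Take the four-element Boolean algebra $\{0,p,\neg p,1\}$. The point $(1,\neg p)$ is not in $E(p)$. But ${\downarrow}(1,\neg p)=\{(1,\neg p),(p,\neg p)\}$, and $(p,\neg p)\in E(p)$ lies below both of these points. Hence $(1,\neg p)\in\mathrm{int}\,\mathrm{cl}\,E(p)\setminus E(p)$. So your $e$ does not take values in $\mathrm{RO}(X)$. The later computations, which manipulate the raw sets $E(p)$, are therefore not computations in the Boolean algebra.

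The repair is what the paper does. Set $e(p)=\mathrm{int}\,\mathrm{cl}\,E(p)$; after adjoining bounds this is the paper's $\Box\Diamond{\Downarrow}(p,0)$, and your convention $E(0)=\varnothing$ avoids its case split. Each step then needs a small extra argument.

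\begin{itemize}
\item \emph{Order-reflection.} You must show $(p,q)\notin\mathrm{int}\,\mathrm{cl}\,E(q)$, not merely $(p,q)\notin E(q)$. Your own computation gives ${\downarrow}(p,q)\cap E(q)=\varnothing$, which suffices.
\item \emph{Finite meets.} The identity $E(\bigwedge F)=\bigcap_{f\in F}E(f)$ still holds. You additionally need that $\mathrm{int}\,\mathrm{cl}$ commutes with finite intersections of open sets.
\item \emph{Exact joins.} The easy inclusion survives, because $e(\bigvee S)$ is a regular open set containing every $e(s)$. For the converse, a point $(a,b)\in e(\bigvee S)$ need no longer satisfy $a\leq\bigvee S$. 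Given $(a',b')\sqsubseteq(a,b)$, first choose $(a'',b'')\sqsubseteq(a',b')$ with $a''\leq\bigvee S$. Then apply exactness to $a''$ to find $s\in S$ with $a''\wedge s\not\leq b''$. This gives $(a''\wedge s,b'')\in E(s)\subseteq e(s)$ below $(a',b')$.
\end{itemize}

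With these changes your argument becomes essentially the paper's proof.
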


\begin{proof} We may assume that $P$ has more than one element, since otherwise it is the one-element Boolean algebra, in which case we can take $e$ to be the identity embedding, or it is the empty poset, in which case we can take $e$ to be the empty embedding into the one-element Boolean algebra. In addition, since each poset can be embedded into a bounded poset by adjoining a top and/or bottom if necessary, which preserves all existing meets and joins, as well as the exactness of meets and joins, we may assume that $P$ is bounded. We denote its minimum element by $0$ and its maximum element by $1$.

Where $\leq$ is the order of $P$, define $(X_P,\sqsubseteq)$ as follows:
\begin{enumerate}
\item $X_P=\{(a,b)\in P^2\mid a\not\leq b\}$;
\item $(a,b)\sqsubseteq (c,d)$ iff $a\leq c$ and $b\geq d$.
\end{enumerate}
Note that $\sqsubseteq$ is a partial order whose top element is $(1,0)$. Given $x\in X_P$, let \[ {\Downarrow}x=\{y\in X_P\mid y\sqsubseteq x\}\mbox{ and }\mathcal{D}(X_P)=\{U\subseteq X_P\mid \mbox{ for all }x\in U, \mathord{\Downarrow}x\subseteq U\}.\] 
Note that $\mathcal{D}(X_P)$ is the downset topology on $X_P$. The interior and closure operations are defined for $U\subseteq X_P$ by 
\[
 \Box U = \{x\in X_P \mid  {\Downarrow} x\subseteq U\} \mbox{ and } \Diamond U = \{x\in X_P \mid  {\Downarrow} x\cap U\neq \varnothing\},
\]
respectively. Hence the regular open sets of the topology are the fixpoints of the $\Box\Diamond$ operation, where $\Box\Diamond U = \{x\in X_P\mid  {\Downarrow}x \subseteq  \Diamond U \}$.  We denote the algebra of regular opens by 
\[\mathbf{B}P = (\{\Box\Diamond U\mid U\in \mathcal{D}(X_P)\}, \subseteq )\]
and  note that it is a complete Boolean algebra by a result of Tarski \citeyearpar{Tarski1937} (see, e.g., \citealt[Ch.~10]{Givant2009}). Recall that in $\mathbf{B}P$, meets are intersections, and joins are given by $\bigsqcup \{U_i\mid i\in I\}=\Box\Diamond \bigcup \{U_i\mid i\in I\}$.
 
Now consider the map $e : P \to \mathbf{B}P$ defined by 
\[e(a) = \begin{cases} \Box\Diamond  {\Downarrow} (a,0) & \mbox{if }a\neq 0 \\ \varnothing &\mbox{otherwise}\end{cases}.\]
 We will prove that $e$ is the desired embedding of $P$ into $\mathbf{B}P$. If $P$ was obtained from an original poset $P^{-}$ by adjoining a new top and/or bottom, then the restriction of $e$ to $P^{-}$ is the desired embedding of $P^{-}$ into a complete Boolean algebra.

First, we prove that $e$ is a poset embedding. To see that $e$ is order-preserving, suppose $a\leq b$. If $a=0$, then $e(a)=\varnothing\subseteq e(b)$. Otherwise, from $a\leq b$ we have $(a,0)\sqsubseteq (b,0)$ and hence $\Box\Diamond \mathord{\Downarrow}(a,0)\subseteq \Box\Diamond \mathord{\Downarrow}(b,0)$, so $e(a)\subseteq e(b)$. Next, we prove that $e$ is order-reflecting. Indeed, if $a \not \leq b$, then $(a,b) \in X_P$ and $(a,b) \in \Box\Diamond{\Downarrow}(a,0)$ since for each $(x,y)\in X_P$, if $(x,y) \sqsubseteq (a,b)$, then $x\leq a$ and $y\geq b$, so $x \leq a$ and $y \geq 0$, and hence $(x,y) \sqsubseteq (a,0)$. On the other hand, we claim that $(a,b)\not\in e(b)$. If $b=0$, this is immediate since $e(0)=\varnothing$, so suppose $b\neq 0$. We must show $(a,b) \notin \Box\Diamond{\Downarrow}(b,0)$, i.e., ${\Downarrow}(a,b) \not\subseteq  \Diamond {\Downarrow}(b,0)$. In particular,  $(a,b) \in {\Downarrow}(a,b)$, since $(a,b) \sqsubseteq (a,b)$, but we claim that $(a,b) \notin  \Diamond {\Downarrow}(b,0)$. For if $(x,y) \sqsubseteq (a,b)$, then $x \leq a$ and $y \geq b$. Thus, $x\not\leq b$, for otherwise $x\leq y$, contradicting $(x,y)\in X_P$. Hence  $(x,y)\not\sqsubseteq (b,0)$.

Now we prove for part \ref{MainThm1} that $e$ preserves finite meets. First observe that 
\[
e(1) = \Box\Diamond  {\Downarrow}(1,0) = \Box\Diamond X_P = X_P.
\]
For nonempty finite meets, suppose $a_1\wedge \dots \wedge a_n\neq 0$. Then by definition, $e(a_1\wedge \dots \wedge a_n) = \Box\Diamond  {\Downarrow}(a_1\wedge \dots \wedge a_n,0)$. We claim that $$\Box\Diamond  {\Downarrow}(a_1\wedge \dots \wedge a_n,0) = \Box\Diamond  {\Downarrow} (a_1,0)\cap \dots \cap \Box\Diamond  {\Downarrow}(a_n,0).$$ First, where $\mathord{\downarrow}x=\{y\in P\mid y\leq x\}$ for $x\in P$, observe that since $ {\downarrow}(a_1\wedge\dots\wedge a_n)= {\downarrow}a_1\cap \dots \cap {\downarrow}a_n$, it follows that $ {\Downarrow}(a_1\wedge \dots\wedge a_n,0) =  {\Downarrow}(a_1,0)\cap\dots\cap {\Downarrow}(a_n,0)$. Then since $\Box\Diamond$ distributes over finite intersections,  $$\Box\Diamond {\Downarrow}(a_1\wedge \dots \wedge a_n,0) = \Box\Diamond(  {\Downarrow}(a_1,0)\cap\dots \cap  {\Downarrow}(a_n,0)) = \Box\Diamond  {\Downarrow}(a_1,0) \cap \dots \cap \Box\Diamond  {\Downarrow}(a_n,0).$$ Next, suppose $a_1\wedge \dots \wedge a_n=0$, so $e(a_1\wedge \dots \wedge a_n)=\varnothing$. We show that $e(a_1) \cap \dots \cap e(a_n) = \varnothing$. Indeed, if $a_i=0$ for some $i$, then we are done by the definition of $e$, so suppose $a_i\neq 0$ for each $i$. Then since $a_1\wedge \dots \wedge a_n =0$, we have $ {\Downarrow}(a_1,0)\cap \dots \cap  {\Downarrow}(a_n,0)=\varnothing$, so 
\[
\Box\Diamond  {\Downarrow}(a_1,0) \cap \dots\cap \Box\Diamond  {\Downarrow}(a_n,0) =\Box\Diamond ( {\Downarrow}(a_1,0)\cap \dots \cap {\Downarrow}(a_n,0)) = \Box\Diamond \varnothing =\varnothing.
\]

For part \ref{MainThm2}, we assume that $P$ is a meet semilattice. To show that $e$ preserves all exact joins, consider any $S\subseteq P$ that has an exact join. If $\bigvee S=0$, then either $S=\varnothing$ or $S=\{0\}$. In either case, $\bigsqcup \{e(s)\mid s\in S\}=\varnothing$, since $e(0)=\varnothing$. Hence $e(\bigvee S)=e(0)=\varnothing=\bigsqcup \{e(s)\mid s\in S\}$. Suppose $\bigvee S\neq 0$. Without loss of generality, we can assume $0\not\in S$. Then to show that $e(\bigvee S)=\bigsqcup \{e(s)\mid s\in S\}$, we must show
\[\Box\Diamond {\Downarrow}\left(\bigvee S,0\right) = \bigsqcup \{\Box\Diamond{\Downarrow}(s,0) \mid s\in S\}.\] By definition of $\bigsqcup$ in $\mathbf{B}P$, we have

\[\bigsqcup \{\Box\Diamond{\Downarrow}(s,0) \mid s\in S\} = \Box\Diamond\bigcup \{\Box\Diamond{\Downarrow}(s,0) \mid s\in S\} = \Box\bigcup \{\Diamond\Box\Diamond{\Downarrow}(s,0) \mid s\in S\}.\]
Thus, it suffices to show
\begin{equation}
\Box\Diamond {\Downarrow}\left(\bigvee S,0\right) = \Box\bigcup \{\Diamond\Box\Diamond{\Downarrow}(s,0) \mid s\in S\}.\label{KeyEq}\end{equation}
The right-to-left inclusion is straightforward: for each $s\in S$, we have $(s,0)\sqsubseteq (\bigvee S,0)$ and thus
$\Box\Diamond {\Downarrow} (s,0)\subseteq \Box\Diamond {\Downarrow} (\bigvee S,0) $. Since this holds for all $s\in S$, we have $\bigcup\{\Box\Diamond {\Downarrow} (s,0) \mid s\in S\}\subseteq \Box\Diamond {\Downarrow} \left(\bigvee S,0\right)$, which implies $\Box\Diamond\bigcup\{\Box\Diamond {\Downarrow} (s,0) \mid s\in S\}\subseteq \Box\Diamond\Box\Diamond {\Downarrow} \left(\bigvee S,0\right)$ and hence 
   $\Box\bigcup \{\Diamond\Box\Diamond {\Downarrow} (s,0) \mid s\in S\}\subseteq \Box\Diamond {\Downarrow} \left(\bigvee S,0\right)$.

For the left-to-right inclusion in (\ref{KeyEq}), suppose $(a,b)$ is in the left-hand side. To show that $(a,b)$ is in the right-hand side of (\ref{KeyEq}), consider any $(a',b')\sqsubseteq (a,b)$. Then since  $(a,b)\in \Box\Diamond {\Downarrow} (\bigvee S,0)$, there is an $(a'',b'')\sqsubseteq (a',b')$ such that $(a'',b'')\in {\Downarrow} (\bigvee S,0)$, i.e., $a''\leq \bigvee S$. Since $\bigvee S$ is an exact join, we have $a''=a''\wedge\bigvee S= \bigvee_{s\in S} (a''\wedge s)$. Since $a''\not\leq b''$, it follows that for some $s\in S$, we have $a''\wedge s\not\leq b''$. Now we claim that  $(a''\wedge s, b'')\in \Box\Diamond{\Downarrow}(s,0)$. If not, then there is an $(a_s,b_s)\sqsubseteq (a''\wedge s, b'')$ such that ${\Downarrow}(a_s,b_s)\cap {\Downarrow}(s,0)=\varnothing$. Thus, $(a_s\wedge s, b_s )\not\in X_P$, which implies $a_s\wedge s\leq b_s$. Since  $(a_s,b_s)\sqsubseteq (a''\wedge s, b'')$, we have $a_s\leq a''\wedge s$ and hence $a_s\leq s$, which with $a_s\wedge s\leq b_s$ implies $a_s\leq b_s$, a contradiction. Hence $(a''\wedge s, b'')\in \Box\Diamond{\Downarrow}(s,0)$, which with $(a''\wedge s, b'')\sqsubseteq (a',b')$ implies $(a',b')\in \Diamond\Box\Diamond{\Downarrow}(s,0)$. Therefore, $(a,b)$ is in the right-hand side of (\ref{KeyEq}).\end{proof}

\begin{remark} As noted in the introduction, the construction of $(X_P,\sqsubseteq)$ in the proof of Theorem \ref{FMS} is inspired by the representation of complete lattices by Allwein and MacCaull~\citeyearpar[p.~207]{Allwein2001}; also see Theorem~2.7 of~\citealt{Holliday2021} and Definition~2.5 of~\citealt{Massas2023}. This representation was specialized to the case of frames in Theorem~4.32 of~\citealt{Bezhanishvili2019}, which was  inspired by the semantics for lax logic developed by Fairtlough and Mendler~\citeyearpar{Mendler1997}. The relation $\sqsubseteq$ is precisely the relation $\geq_2$ in Theorem~2.7 of \citealt{Bezhanishvili2019}.\end{remark}

Dualizing Theorem \ref{FMS}, we obtain the following.

\begin{theorem}[Funayama for join semilattices]\label{FJS} For any poset $P$, there is an embedding $e$ of $P$ into a complete Boolean algebra such that:
\begin{enumerate}
    \item  $e$ preserves all existing finite joins;
\item  if $P$ is a join semilattice, then $e$ preserves all existing exact meets.
\end{enumerate}
\end{theorem}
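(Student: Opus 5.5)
The plan is to deduce Theorem~\ref{FJS} from Theorem~\ref{FMS} by order duality together with Boolean complementation. Given a poset $P$, let $P^{\partial}$ be its order dual. Every notion in Theorem~\ref{FJS} corresponds under this passage to a notion in Theorem~\ref{FMS}:
\begin{itemize}
\item finite joins in $P$ are finite meets in $P^{\partial}$;
\item $P$ is a join semilattice iff $P^{\partial}$ is a meet semilattice;
\item a subset $S$ has an exact meet in the join semilattice $P$ iff $S$ has an exact join in the meet semilattice $P^{\partial}$.
\end{itemize}
The last point holds because the defining equation $a\vee\bigwedge S=\bigwedge\{a\vee s\mid s\in S\}$ in $P$ is literally the equation $a\wedge\bigvee S=\bigvee\{a\wedge s\mid s\in S\}$ read in $P^{\partial}$, and the existence clauses also correspond. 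Applying Theorem~\ref{FMS} to $P^{\partial}$ gives a complete Boolean algebra $B$ and an embedding $e^{\partial}:P^{\partial}\to B$. This embedding preserves all existing finite meets of $P^{\partial}$, and when $P$ is a join semilattice it also preserves all existing exact joins of $P^{\partial}$.

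Next I transport back to $P$ through the Boolean complement. Define $e:P\to B$ by $e(a)=\neg e^{\partial}(a)$. I will check three things.
\begin{enumerate}
\item $e$ is an order embedding. For $a\le b$ in $P$ we have $b\le a$ in $P^{\partial}$, so $e^{\partial}(b)\le e^{\partial}(a)$, hence $\neg e^{\partial}(a)\le\neg e^{\partial}(b)$. Order reflection follows by running the same argument backwards, using that $e^{\partial}$ reflects order and $\neg$ is an order-reversing bijection.
\item $e$ preserves finite joins. If $\bigvee_i a_i$ exists in $P$, it is the meet of the $a_i$ in $P^{\partial}$, so $e^{\partial}$ sends it to $\bigwedge_i e^{\partial}(a_i)$. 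Then $e(\bigvee_i a_i)=\neg\bigwedge_i e^{\partial}(a_i)=\bigvee_i\neg e^{\partial}(a_i)=\bigvee_i e(a_i)$ by De Morgan.
\item $e$ preserves exact meets. Using the infinitary De Morgan law $\neg\bigvee_i x_i=\bigwedge_i\neg x_i$, valid in any complete Boolean algebra, preservation of exact joins of $P^{\partial}$ by $e^{\partial}$ becomes preservation of exact meets of $P$ by $e$.
\end{enumerate}
Equivalently, one may regard $e^{\partial}$ itself as an embedding of $P$ into the dual algebra $B^{\partial}$, which is again a complete Boolean algebra. The complement isomorphism $B^{\partial}\cong B$ is exactly the composition above.

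No step here is a genuine obstacle. The only point needing care is the bookkeeping in the exact-meet clause: the hypothesis ``$P$ is a join semilattice and $S$ has an exact meet'' must be verified to correspond exactly to the hypothesis of part~\ref{MainThm2} of Theorem~\ref{FMS} for $P^{\partial}$, including the requirement that the meets $\bigwedge\{a\vee s\mid s\in S\}$ exist for every $a$. Alternatively, one could redo the proof of Theorem~\ref{FMS} directly with the pairs $(a,b)$, $a\not\le b$, ordered by $(a,b)\sqsubseteq(c,d)$ iff $a\ge c$ and $b\le d$, together with $e(a)=X_P\setminus$-type complements. The duality route avoids repeating that argument.
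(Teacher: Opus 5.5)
Your proposal is correct and is essentially the paper's proof: apply Theorem~\ref{FMS} to $P^\partial$ and regard $e^\partial$ as an embedding of $P$ into the complete Boolean algebra $(\mathbf{B}(P^\partial))^\partial$, which you make explicit by composing with the complement isomorphism $B^\partial\cong B$ and using De Morgan. Your closing aside is not needed and contains a slip: under the order $a\ge c$, $b\le d$, the dual construction uses pairs with $b\not\le a$, not $a\not\le b$; as you wrote it, $(1,0)$ becomes the bottom and the regular open algebra collapses to two elements. Nothing in your main argument depends on that aside.
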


\begin{proof} As in the proof of Theorem~\ref{FMS}, we may assume without loss of generality that $P$ is bounded. Let $P^\partial$ be the dual poset of $P$. Let $e^\partial$ be the embedding of $P^\partial$ into the complete Boolean algebra $\mathbf{B}(P^\partial)$ constructed in the proof of Theorem \ref{FMS}. Then $e^\partial$ (i) preserves finite meets and (ii) preserves all existing exact joins if $P^\partial$ is a meet semilattice. Therefore, $e^\partial$ is an embedding of $P$ into the dual $(\mathbf{B} (P{^\partial}))^\partial$ of $\mathbf{B}(P^\partial)$, which (a) preserves finite joins by (i) and (b) preserves all existing exact meets if $P$ is a join semilattice, in which case $P^\partial$ is a meet semilattice and (ii) applies.\end{proof}

\begin{remark}\label{Duals} The posets $X_{P}$ and $X_{P^\partial}$ are isomorphic via the map $f$ defined by $f(a,b)=(b,a)$. Note that $a\not\leq b$ iff $b\not\leq^\partial a$, so $f$ is a well-defined bijection from $X_{P}$ to $X_{P^\partial}$. To see that it is an order-isomorphism, observe that
\begin{eqnarray*}
(a,b)\sqsubseteq_{X_P}(c,d)&\Longleftrightarrow& a\leq_P c \mbox{ and }b\geq_P d\\
&\Longleftrightarrow& c\leq_{P^\partial} a \mbox{ and } d\geq_{P^\partial} b \\
&\Longleftrightarrow& (b,a)\sqsubseteq_{X_{P^\partial}} (d,c).
\end{eqnarray*}
It follows that $\mathbf{B}P$ is isomorphic to $\mathbf{B} (P{^\partial})$ and hence to $(\mathbf{B} (P{^\partial}))^\partial$ by the self-duality of Boolean algebras.\end{remark}

Despite Remark~\ref{Duals}, Theorems~\ref{FMS} and \ref{FJS} cannot be directly combined to obtain a single embedding from a lattice into a complete Boolean algebra that preserves both exact joins and exact meets, since we cannot assume that $f[e(a)]=e^\partial(a)$.\footnote{Indeed, let $P$ be the result of adding a new top above the 4-element Boolean algebra, so $P^\partial$ is the result of adding a new bottom below the 4-element Boolean algebra. Then where $a$ is one of the atoms of $P$, one can check that $f[e(a)]\neq e^\partial(a)$.} However, by assuming distributivity, we are able to prove that $e$ preserves all exact joins and exact meets.

\begin{theorem}[Funayama for distributive lattices]\label{FD} For any distributive lattice $L$, there is an embedding $e$ of $L$ into a complete Boolean algebra that preserves all  exact joins and exact meets.
\end{theorem}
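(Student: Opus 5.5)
The plan is to use the same embedding $e\colon L\to\mathbf{B}L$ built in the proof of Theorem~\ref{FMS}. As there, we first adjoin a new top and/or bottom if necessary, so we may assume $L$ is bounded. The result is still a distributive lattice, and exact joins and meets are preserved. Theorem~\ref{FMS} already tells us that $e$ is a poset embedding preserving finite meets and all exact joins, since a lattice is a meet semilattice. What remains is to show that $e$ preserves finite joins and all exact meets.

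The key step is a concrete description of $e(a)$ that uses distributivity. Namely, for every $a\in L$ (including $a=0$),
\[
e(a)=\{(x,y)\in X_L\mid x\leq a\vee y\}.
\]
To see this, first unfold $\Box\Diamond{\Downarrow}(a,0)$. We get $(x,y)\in e(a)$ iff for every $(x',y')\sqsubseteq(x,y)$ there is $(x'',y'')\sqsubseteq (x',y')$ with $x''\le a$. Taking $x''=x'\wedge a$ and $y''=y'$, this holds iff $x'\wedge a\not\leq y'$ for every $(x',y')\sqsubseteq (x,y)$.

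\emph{If $x\le a\vee y$:} take any $(x',y')\sqsubseteq(x,y)$ and suppose $x'\wedge a\le y'$. Then distributivity gives $x'=x'\wedge(a\vee y)=(x'\wedge a)\vee(x'\wedge y)\le y'$, which is impossible since $x'\not\le y'$.

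\emph{If $x\not\le a\vee y$:} the pair $(x,a\vee y)$ lies in $X_L$, is $\sqsubseteq(x,y)$, and satisfies $x\wedge a\le a\vee y$. So it witnesses $(x,y)\notin e(a)$.

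This characterization is the main obstacle. Once it is in hand, the rest is short.

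For exact meets, let $S$ have an exact meet. The inclusion $e(\bigwedge S)\subseteq\bigcap_{s\in S} e(s)$ follows from monotonicity. Conversely, suppose $(x,y)\in e(s)$ for all $s\in S$. Then $x\le s\vee y$ for all $s$, so $x\le\bigwedge_{s\in S}(y\vee s)=y\vee\bigwedge S$ by exactness. Hence $(x,y)\in e(\bigwedge S)$. The empty meet is covered by $e(1)=X_L$.

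For binary joins, we need $e(a\vee b)\subseteq \Box\Diamond(e(a)\cup e(b))$; the reverse inclusion is again monotonicity. Let $x\le a\vee b\vee y$ and $(x',y')\sqsubseteq(x,y)$. Then $x'\le a\vee b\vee y'$. If both $x'\wedge a\le y'$ and $x'\wedge b\le y'$ held, distributivity would give $x'\le y'$. So, say, $x'\wedge a\not\le y'$. Then $(x'\wedge a,y')\sqsubseteq(x',y')$, and it lies in $e(a)$ because $x'\wedge a\le a\vee y'$. Thus $(x',y')\in\Diamond(e(a)\cup e(b))$, as required. The empty join is covered by $e(0)=\varnothing$.

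Finally, as in Theorem~\ref{FMS}, restricting $e$ to the original lattice yields the desired embedding. Finite joins and meets are special cases of exact ones in a distributive lattice, so they are also preserved.
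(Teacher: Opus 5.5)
Your proof is correct, but it is organized differently from the paper's.

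\textbf{The paper's route.} The paper never computes $e(a)$ explicitly. It proves preservation of exact meets inside the $\Box\Diamond$ unfolding:
\begin{enumerate}
\item[(i)] assume $(x',y')\notin\Diamond{\Downarrow}(\bigwedge S,0)$;
\item[(ii)] use distributivity and exactness to write $y'=(y'\vee x')\wedge\bigwedge_{s\in S}(y'\vee s)$;
\item[(iii)] extract an $s$ with $x'\not\leq y'\vee s$, and reach a contradiction via the pair $(x',y'\vee s)$.
\end{enumerate}
The case $\bigwedge S=0$ is handled separately. Finite joins need no extra work: nonempty finite joins in a distributive lattice are exact, so Theorem~\ref{FMS} already covers them.

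\textbf{Your route.} You isolate the same ``raise the second coordinate'' trick, in the form of the pair $(x,a\vee y)$, once and for all. It goes into the lemma $e(a)=\{(x,y)\in X_L\mid x\leq a\vee y\}$. After that, meet preservation is a one-line consequence of exactness, with no case split on $\bigwedge S=0$.

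One direction of your unfolding is left implicit: any witness $(x'',y'')\sqsubseteq(x',y')$ with $x''\leq a$ forces $x'\wedge a\not\leq y'$. This is immediate from $x''\leq x'\wedge a$ and $y''\geq y'$.

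\textbf{Comparison.} Your route gives a concrete description of the embedding, which makes the distributive case transparent. Combining distributivity with exactness, your binary-join argument carries over verbatim to reprove preservation of all exact joins. The lemma would also shorten computations such as the density argument in Proposition~\ref{DLMacNeille}.

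The paper's route gives modularity. The join half (Theorem~\ref{FMS}) needs no distributivity at all, whereas your lemma genuinely fails without it: in $M_3$ one has $1\leq a\vee b$ but $(1,b)\notin e(a)$.

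Your separate check of binary joins is correct but redundant given Theorem~\ref{FMS}, as your final sentence essentially acknowledges.
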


\begin{proof} We continue from the end of the proof of Theorem \ref{FMS} but now assume that $P$ is a distributive lattice. To show that $e$ preserves exact meets, consider any $S\subseteq P$ that has an exact meet. First, suppose $\bigwedge S\neq 0$. Then to show that $e(\bigwedge S)= \bigsqcap \{e(s)\mid s\in S\}$, we must show that
\[\Box\Diamond {\Downarrow}\left(\bigwedge S,0\right) = \bigcap \{\Box\Diamond{\Downarrow}(s,0) \mid s\in S\}.\]
For the left-to-right inclusion, for each $s\in S$, since $\bigwedge S\leq s$, we have $(\bigwedge S, 0)\sqsubseteq (s,0)$ and hence $\Box\Diamond{\Downarrow}(\bigwedge S, 0)\subseteq  \Box\Diamond{\Downarrow}(s,0)$. Therefore, $\Box\Diamond {\Downarrow}(\bigwedge S,0) \subseteq \bigcap \{\Box\Diamond{\Downarrow}(s,0) \mid s\in S\}$.

For the right-to-left inclusion, assume $(x,y)\in \bigcap \{\Box\Diamond {\Downarrow} (s,0)\mid s\in S\}$. To show  $(x,y)\in \Box\Diamond {\Downarrow}(\bigwedge S,0)$, consider any $(x',y')\sqsubseteq (x,y)$.  For contradiction, suppose $(x',y')\not\in \Diamond {\Downarrow}(\bigwedge S,0)$, i.e., ${\Downarrow}(x',y')\cap {\Downarrow}(\bigwedge S,0)=\varnothing$. Then $(x'\wedge \bigwedge S, y')\not\in X_P$, so $x'\wedge \bigwedge S\leq y'$ and hence 
\begin{eqnarray*}
y'&=&y'\vee \left(x'\wedge \bigwedge S\right)\\
&=&(y'\vee x')\wedge \left(y'\vee \bigwedge S\right) \quad\mbox{ by distributivity}\\
&=& (y'\vee x')\wedge \bigwedge_{s\in S} (y'\vee s) \quad\mbox{ since }\bigwedge S\mbox{ is an exact meet}.
\end{eqnarray*}
Since $x'\not\leq y'$, it follows that $x'\not\leq \bigwedge_{s\in S} (y'\vee s)$, so there is some $s\in S$ such that $x'\not\leq y'\vee s$. Now $(x',y'\vee s)\sqsubseteq (x,y)$, which with $(x,y)\in \bigcap \{\Box\Diamond {\Downarrow} (s,0)\mid s\in S\}$ implies $(x',y'\vee s)\in \Diamond {\Downarrow}(s,0)$. Hence there is some $(x_s,y_s)\sqsubseteq (x',y'\vee s)$ such that $(x_s,y_s)\sqsubseteq (s,0)$. But then $x_s\leq x'\wedge s \leq y'\vee s\leq y_s$, a contradiction.

Next suppose $\bigwedge S=0$. Then $e(\bigwedge S)=e(0)=\varnothing\subseteq \bigsqcap\{e(s)\mid s\in S\}$. For the reverse inclusion, we claim that $\bigsqcap\{e(s)\mid s\in S\}=\varnothing$. If $0\in S$, this is immediate, since $e(0)=\varnothing$. So assume $0\not\in S$. Further suppose for contradiction that $(x,y)\in \bigsqcap\{e(s)\mid s\in S\}$, so $(x,y)\in \bigcap \{\Box\Diamond {\Downarrow} (s,0)\mid s\in S\}$. Since $\bigwedge S=0$, we have $x\wedge \bigwedge S\leq y$. Then we derive a contradiction exactly as above but replacing $x'$ by $x$ and $y'$ by $y$.\end{proof}

Whether the distributivity of $L$ in Theorem \ref{FD} is necessary is a question we leave open.

\begin{problem}\label{1stProblem} Is there a non-distributive lattice $L$ for which there is no embedding of $L$ into a complete Boolean algebra that preserves all finite meets, all exact joins, and all exact meets?
\end{problem}

We now obtain Funayama's theorem as an immediate corollary of Theorem \ref{FD}.

\begin{corollary}[Funayama] For any lattice $L$, there is a complete embedding of $L$ into a complete Boolean algebra iff $L$ satisfies both JID and MID.
\end{corollary}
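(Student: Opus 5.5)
The plan is to prove both directions separately, with the right-to-left direction obtained directly from Theorem~\ref{FD}.

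\emph{Left to right.} Suppose $e\colon L\to B$ is a complete embedding into a complete Boolean algebra $B$. Take an existing join $\bigvee S$ in $L$ and any $a\in L$. Since $e$ preserves finite meets and the existing join, we get
\[
e\left(a\wedge\bigvee S\right)=e(a)\wedge\bigvee_{s\in S}e(s)=\bigvee_{s\in S}\bigl(e(a)\wedge e(s)\bigr)=\bigvee_{s\in S}e(a\wedge s).
\]
The middle equality holds because $B$ satisfies JID, as recalled from von Neumann in the introduction. Write $m=a\wedge\bigvee S$. Then $m$ is an upper bound of $\{a\wedge s\mid s\in S\}$. If $u\in L$ is any other upper bound, then $e(u)\geq e(a\wedge s)$ for every $s\in S$. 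Hence $e(u)\geq\bigvee_{s} e(a\wedge s)=e(m)$, and since $e$ is order-reflecting, $u\geq m$. So $\bigvee\{a\wedge s\mid s\in S\}$ exists in $L$ and equals $a\wedge\bigvee S$, which is JID. MID follows by the dual argument, using that $B$ satisfies MID.

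\emph{Right to left.} Suppose $L$ satisfies JID and MID. The key observation is that in a lattice satisfying JID, every existing join is exact in the sense of Definition~\ref{ExactDef}; this is literally what JID says. Dually, under MID every existing meet is exact. We also need $L$ to be distributive in order to apply Theorem~\ref{FD}. This holds because JID applied to two-element sets $S=\{b,c\}$ gives $a\wedge(b\vee c)=(a\wedge b)\vee(a\wedge c)$, which is finite distributivity. (The empty lattice is trivial.) Theorem~\ref{FD} then yields an embedding $e$ of $L$ into a complete Boolean algebra preserving all exact joins and exact meets, and hence, by the observation above, all existing joins and meets. This is a complete embedding.

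\emph{Main obstacle.} The only point needing care is whether "complete embedding" also requires preserving finite meets and joins, including the empty ones, i.e.\ the top and bottom when they exist. This is covered: finite joins and meets of a lattice always exist, and under JID and MID they are exact, so Theorem~\ref{FD} already handles them.
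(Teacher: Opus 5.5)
Your proof is correct and takes the same route as the paper. For right-to-left, JID and MID make every existing join and meet exact, binary JID gives distributivity, and Theorem~\ref{FD} finishes. For left-to-right, the paper just cites von Neumann's observation, whereas you also check explicitly that JID and MID pass from $B$ back to $L$ along a complete embedding, using order-reflection to get the join in $L$; this is a useful but routine addition.
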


\begin{proof}
As noted in the Introduction, the left-to-right direction is clear from von Neumann's \citeyearpar{Vonneumann1936} observation that complete Boolean algebras satisfy both JID and MID. For the right-to-left direction, if $L$ satisfies both JID and MID, then every existing join and meet is exact. Since $L$ is a lattice, all nonempty finite subsets have exact meets and joins. Thus, $L$ is distributive. Now apply \cref{FD}. 
\end{proof}

\section{Connection to MacNeille completion}

Given a lattice $L$, let $e:L\to\mathbf{B}L$ be the embedding from the proof of Theorem~\ref{FMS}.\footnote{We assume without loss of generality in this section that $L$ is bounded with $0\neq 1$; see the proof of Theorem~\ref{FMS} for justification.} Since $\mathbf{B}L$ is a Boolean algebra, the image $e[L]$ generates a Boolean subalgebra of $\mathbf{B}L$, which we denote by $\mathscr{B}L$.

\begin{remark}
We intentionally use the same notation, $\mathscr{B}L$, as we did for the Boolean envelope of a distributive lattice $L$ in Section~\ref{Intro}, since if $e$ is a lattice embedding of a distributive lattice $L$ into a Boolean algebra $B$, then the subalgebra of $B$ generated by $e[L]$ is isomorphic to the Boolean envelope of $L$ (see, e.g., \citealt[Sec.~V.4]{Balbes1974}).
\end{remark}

A natural question is how $\mathscr{B}L$ relates to $\mathbf{B}L$. Recall that the MacNeille completion of a Boolean algebra $B$ may be described, up to isomorphism, as a complete Boolean algebra $\mathfrak{M}B$ together with a Boolean embedding $h:B\to \mathfrak{M}B$ such that $h[B]$ is dense in $\mathfrak{M}B$, i.e., for every nonzero $x\in \mathfrak{M}B$, there is a nonzero $y\in h[B]$ such that $y\leq x$ (see \citealt[p.~153]{Sikorski1969}). Thus, if $B$ is a subalgebra of a complete Boolean algebra $C$, then $C$ is isomorphic to the MacNeille completion of $B$ just in case $B$ is a dense subalgebra of $C$.

\begin{proposition}\label{DLMacNeille} For any distributive lattice $L$, $\mathbf{B}L$ is isomorphic to the MacNeille completion of $\mathscr{B}L$.
\end{proposition}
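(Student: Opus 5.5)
By the criterion recalled just before the statement, it suffices to show that $\mathscr{B}L$ is a dense subalgebra of $\mathbf{B}L$. Since $L$ is distributive, Theorem~\ref{FD} (or just part~\ref{MainThm1} of Theorem~\ref{FMS} together with the dual finite-join preservation) shows that $e$ is a bounded lattice embedding. Hence $\mathscr{B}L$ is a Boolean subalgebra of $\mathbf{B}L$. Finite joins are in fact already covered by Theorem~\ref{FD}, because in a distributive lattice nonempty finite joins are exact.

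It remains to prove density. We must find, for each nonzero regular open $U\in\mathbf{B}L$, a nonzero element of $\mathscr{B}L$ below $U$. The natural candidates are the elements $e(a)\wedge\neg e(b)=e(a)\cap \Box(X_L\setminus e(b))$. Since $U\neq\varnothing$, pick $(a,b)\in U$, so $a\not\leq b$ and in particular $a\neq 0$.

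\textbf{Claim.} $e(a)\cap\neg e(b)\subseteq \Box\Diamond{\Downarrow}(a,b)\subseteq U$, and this element is nonzero.

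\emph{The second inclusion.} $U$ is a downset containing $(a,b)$, so ${\Downarrow}(a,b)\subseteq U$. Because $U$ is regular open, $\Box\Diamond{\Downarrow}(a,b)\subseteq\Box\Diamond U=U$.

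\emph{Nonemptiness.} The proof of order-reflection in Theorem~\ref{FMS} shows that $(a,b)\in e(a)$ and $(a,b)\notin e(b)$. I want the stronger statement $(a,b)\in\neg e(b)$, i.e.\ that no $(x,y)\sqsubseteq(a,b)$ lies in $e(b)$. The argument there shows that for $(x,y)\sqsubseteq(a,b)$ we have $x\not\leq b$, so ${\Downarrow}(x,y)\cap{\Downarrow}(b,0)=\varnothing$. Hence $(x,y)\notin\Diamond{\Downarrow}(b,0)\supseteq\Box\Diamond{\Downarrow}(b,0)$. So $(a,b)\in e(a)\cap\neg e(b)$.

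\emph{The first inclusion.} This is the main obstacle. Let $(x,y)\in e(a)\cap\neg e(b)$ and take any $(x',y')\sqsubseteq(x,y)$. We must find something below $(x',y')$ that is also below $(a,b)$. Here is the route I would try.
\begin{enumerate}
\item Since $(x',y')\in e(a)$, there is $(x'',y'')\sqsubseteq(x',y')$ with $x''\leq a$.
\item Now consider $(x'',y''\vee b)$. If $x''\not\leq y''\vee b$, then $(x'',y''\vee b)\sqsubseteq(x',y')$ and also $(x'',y''\vee b)\sqsubseteq(a,b)$, and we are done.
\item Otherwise $x''\leq y''\vee b$. Distributivity gives $x''=(x''\wedge y'')\vee(x''\wedge b)$. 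Since $x''\not\leq y''$, we get $x''\wedge b\not\leq y''$. Then $(x''\wedge b,y'')\sqsubseteq(x,y)$ and $(x''\wedge b,y'')\sqsubseteq(b,0)$, so the pair $(x''\wedge b,y'')$ lies in ${\Downarrow}(b,0)\subseteq e(b)$.
\item This contradicts $(x,y)\in\neg e(b)=\Box(X_L\setminus e(b))$.
\end{enumerate}
Thus every $(x',y')\sqsubseteq(x,y)$ meets ${\Downarrow}(a,b)$ from below, so $(x,y)\in\Box\Diamond{\Downarrow}(a,b)$. Distributivity enters exactly in step~3, mirroring its role in Theorem~\ref{FD}.

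Density follows, and hence $\mathbf{B}L\cong\mathfrak{M}\mathscr{B}L$.
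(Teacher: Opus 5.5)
Your proposal is correct and follows essentially the paper's route: you prove density of $\mathscr{B}L$ by picking $(a,b)\in U$ and showing $\varnothing\neq e(a)\wedge\neg e(b)\subseteq\Box\Diamond{\Downarrow}(a,b)\subseteq U$, with distributivity used to reach a contradiction with membership in $\neg e(b)$. The differences are cosmetic. You check nonemptiness directly by placing $(a,b)$ in $e(a)\cap\neg e(b)$, where the paper uses $e(a)\not\subseteq e(b)$ plus Boolean reasoning. You also use the witness $(x'',y''\vee b)$ with a case split, where the paper first extracts $b\wedge x'\leq y'$ from $\neg e(b)$ and then takes $(a\wedge x',b\vee y')$.
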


\begin{proof} It suffices to show that $\mathscr{B}L$ is dense in $\mathbf{B}L$. Consider a nonempty $U\in \mathbf{B} L$, so there is some $(c,d)\in U$. Let $e$ be the embedding of $L$ into $\mathbf{B}L$ used in the proofs of Theorems \ref{FMS} and \ref{FD}. Since $(c,d)\in X_L$ and hence $c \not\leq d$, we have $e(c) \not\subseteq e(d)$,  so $e(c) \wedge \lnot e(d) \ne \varnothing$. We claim that $e(c)\wedge\neg e(d)\subseteq U$. Since $(c,d)\in U\in\mathbf{B}L$, we have $\Box\Diamond \mathord{\Downarrow} (c,d)\subseteq U$, so it suffices to show that $e(c)\wedge\neg e(d)\subseteq \Box\Diamond \mathord{\Downarrow} (c,d)$. Suppose $(w,v)\in e(c)\wedge\neg e(d)$. To show that $(w,v)\in\Box\Diamond\mathord{\Downarrow} (c,d)$, consider any $(w',v')\sqsubseteq (w,v)$, so $w'\leq w$ and $v\leq v'$. We must find a $(w'',v'')\sqsubseteq (w',v')$ such that $(w'',v'')\sqsubseteq (c,d)$. Since $(w,v)\in e(c)=\Box\Diamond\mathord{\Downarrow}(c,0)$, we have $(w',v')\in \Diamond\mathord{\Downarrow}(c,0)$, so there is some $(w'',v'')\sqsubseteq (w',v')$ such that $(w'',v'')\sqsubseteq (c,0)$. It follows that $w''\leq c\wedge w'$ and $w''\not\leq v''\geq v'$, so $w''\not\leq v'$ and hence $c\wedge w'\not\leq v'$. Since $(w,v)\in\neg e(d)=\neg\Box\Diamond\mathord{\Downarrow}(d,0)$, we have $d\wedge w\leq v$, since otherwise $(d\wedge w,v)\sqsubseteq (w,v)$ and $(d\wedge w,v)\in \mathord{\Downarrow}(d,0)\subseteq \Box\Diamond\mathord{\Downarrow}(d,0)$. Then given $d\wedge w\leq v$,  $w'\leq w$, and $v\leq v'$, we have $d\wedge w'\leq v'$. Toward showing that $c\wedge w'\not\leq d\vee v'$, suppose for contradiction that $c\wedge w'\leq d\vee v'$. Then $c\wedge w'=(c\wedge w')\wedge (d\vee v')=(c\wedge w'\wedge d)\vee (c\wedge w'\wedge v')$, using distributivity for the second equation. But then since $d\wedge w'\leq v'$ and $c\wedge w'\wedge v'\leq v'$, it follows that $c\wedge w'\leq v'$, contradicting our derivation of $c\wedge w'\not\leq v'$ above. Thus, $c \wedge w'\not\leq d\vee v'$, so $(c\wedge w',d\vee v')\in X_L$. Moreover, $(c\wedge w',d\vee v')\sqsubseteq (w',v')$ and $(c\wedge w',d\vee v')\sqsubseteq (c,d)$. Therefore, we may take $(w'',v'')=(c\wedge w',d\vee v')$. This completes the proof that $(w,v)\in\Box\Diamond \mathord{\Downarrow}(c,d)$ and hence that $e(c)\wedge\neg e(d)\subseteq \Box\Diamond \mathord{\Downarrow}(c,d)$, which shows that $\mathscr{B}L$ is dense in $\mathbf{B}L$.\end{proof}

It follows from Proposition \ref{DLMacNeille} that for distributive lattices, our construction is isomorphic to that of Gr\"atzer, and for complete Heyting algebras, it is isomorphic to that of Isbell (recall Section~\ref{Intro}). However, for non-distributive lattices, it is not guaranteed that $\mathbf{B}L$ is the MacNeille completion of $\mathscr{B}L$, as shown by the following example using the lattice $M_3$ displayed on the left of Figure~\ref{M3fig}. 

\begin{figure}[h]
\begin{center}
\begin{minipage}{1.5in}
\begin{center}
\begin{tikzpicture}[
    every node/.style={inner sep=2pt, font=\small},
    every edge/.style={draw}
]
\node (1) at (0,0) {$1$};
\node (x) at (-1,-1) {\textcolor{red}{$a$}};
\node (y) at (0,-1) {\textcolor{blue}{$b$}};
\node (z) at (1,-1) {\textcolor{medgreen}{$c$}};
\node (0) at (0,-2) {$0$};
\path (1) edge (y);
\path (1) edge (x);
\path (1) edge (z);
\path (x) edge (0);
\path (y) edge (0);
\path (z) edge (0);
\end{tikzpicture}
\end{center}
\end{minipage}
\begin{minipage}{3.25in}
\begin{center}
\begin{tikzpicture}[
    every node/.style={inner sep=2pt, font=\small},
    every edge/.style={draw, ->, >=Stealth}
]
 
\node (10) at (0, 0)          {$(1,0)$};
 
\node (1a) at (30:2)          {$(1,a)$};
\node (1b) at (270:2)         {$(1,b)$};
\node (1c) at (150:2)         {$(1,c)$};
\node (a0) at (210:2)         {\textcolor{red}{$(a,0)$}};
\node (b0) at (90:2)          {\textcolor{blue}{$(b,0)$}};
\node (c0) at (330:2)         {\textcolor{medgreen}{$(c,0)$}};
 
\node (ba) at ( 1.732,  3)    {\textcolor{blue}{$(b,a)$}};
\node (bc) at (-1.732,  3)    {\textcolor{blue}{$(b,c)$}};
\node (ac) at (-3.464,  0)    {\textcolor{red}{$(a,c)$}};
\node (ab) at (-1.732, -3)    {\textcolor{red}{$(a,b)$}};
\node (cb) at ( 1.732, -3)    {\textcolor{medgreen}{$(c,b)$}};
\node (ca) at ( 3.464,  0)    {\textcolor{medgreen}{$(c,a)$}};
 
\path (10) edge (1a);
\path (10) edge (1b);
\path (10) edge (1c);
\path (10) edge (a0);
\path (10) edge (b0);
\path (10) edge (c0);
\path (1a) edge (ba);
\path (1a) edge (ca);
\path (1b) edge (ab);
\path (1b) edge (cb);
\path (1c) edge (ac);
\path (1c) edge (bc);
\path (a0) edge (ab);
\path (a0) edge (ac);
\path (b0) edge (ba);
\path (b0) edge (bc);
\path (c0) edge (ca);
\path (c0) edge (cb);
\end{tikzpicture}
\end{center}
\end{minipage}
\end{center}
\caption{the lattice $M_3$ (left) and $X_{M_3}$ (right). An arrow from $(x,y)$ to $(x',y')$ indicates that $(x',y')\sqsubseteq (x,y)$. The color of the nodes indicates the embedding $e: M_3\to \mathbf{B}M_3$.}\label{M3fig}
\end{figure}
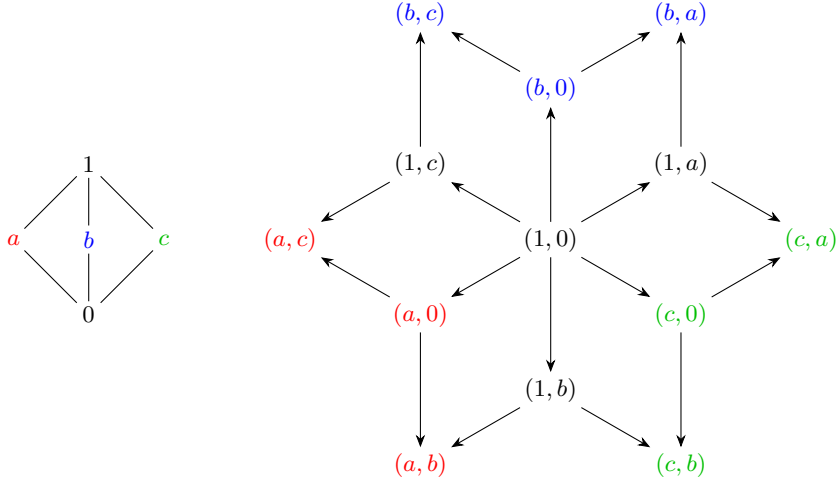

\begin{fact}\label{M3Fact} $\mathscr{B}M_3$ is a proper subalgebra of $\mathbf{B} M_3$.
\end{fact}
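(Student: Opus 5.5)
The plan is to compute $\mathbf{B}M_3$ explicitly, using the fact that $X_{M_3}$ is finite, and then to compare it with the Boolean subalgebra generated by $e[M_3]$. In a finite poset with the downset topology, $\Box\Diamond U$ has a simple description. We have $x\in\Box\Diamond U$ iff every $y\sqsubseteq x$ has some $z\sqsubseteq y$ with $z\in U$. Since every element lies above a minimal element, this is equivalent to: every minimal element of ${\Downarrow}x$ lies in $U$. Write $\mathrm{Min}$ for the set of $\sqsubseteq$-minimal elements of $X_{M_3}$. It follows that the regular opens are exactly the sets
\[ R_A=\{x\in X_{M_3}\mid {\Downarrow}x\cap \mathrm{Min}\subseteq A\},\qquad A\subseteq \mathrm{Min}. \]
The assignment $A\mapsto R_A$ is injective, because for $m\in\mathrm{Min}$ we have ${\Downarrow}m=\{m\}$, so $R_A\cap\mathrm{Min}=A$. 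It is also order-preserving in both directions. Hence $\mathbf{B}M_3$ is isomorphic to the powerset of $\mathrm{Min}$.

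Next, I read off $\mathrm{Min}$ from Figure~\ref{M3fig}. It consists of the six pairs $(x,y)$ with $x,y$ distinct atoms: $(a,b),(a,c),(b,a),(b,c),(c,a),(c,b)$. Indeed, any $(x',y')\sqsubseteq(x,y)$ has $x'\leq x$ and $y'\geq y$ with $x'\not\leq y'$. This forces $x'=x$ and $y'=y$, since $x'\neq 0$ and $y'\neq 1$. So $\mathbf{B}M_3$ has $2^6=64$ elements.

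Then I compute $e$ on $M_3$ via this description. For an atom $p$, the minimal elements below $(p,0)$ are the two pairs $(p,q)$ with $q\neq p$ an atom. So $e(a)=R_{\{(a,b),(a,c)\}}$, $e(b)=R_{\{(b,a),(b,c)\}}$ and $e(c)=R_{\{(c,a),(c,b)\}}$, which matches the coloring in the figure. We also have $e(0)=\varnothing$ and $e(1)=X_{M_3}$.

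Finally, under the isomorphism with $\mathcal{P}(\mathrm{Min})$, the images $e(a),e(b),e(c)$ correspond to three pairwise disjoint two-element sets whose union is $\mathrm{Min}$, together with $\varnothing$ and $\mathrm{Min}$. These three blocks are closed under complements of unions, so the Boolean subalgebra they generate consists of the unions of blocks. Thus $\mathscr{B}M_3$ has exactly $8$ elements, fewer than $64$, and it is therefore proper. For instance, the regular open $R_{\{(a,b)\}}$ is nonempty but contains no nonzero element of $\mathscr{B}M_3$, so density also fails, which is the intended point.

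The only step needing real care is the characterization of regular opens through the minimal elements. It relies on finiteness, which gives well-foundedness, so that every element lies above a minimal one.
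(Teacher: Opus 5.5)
Your proof is correct, but it takes a different route from the paper. The paper argues by direct computation throughout. It checks by hand that $\{(a,b)\}$ is a $\Box\Diamond$-fixpoint, and computes $e(a)=\{(a,0),(a,b),(a,c)\}$ and $\neg e(a)$ by case analysis (the other atoms by symmetry). From the resulting inclusions it deduces that $e(a),e(b),e(c)$ are the only atoms of $\mathscr{B}M_3$. It concludes that $\{(a,b)\}$, a proper nonempty subset of the atom $e(a)$, cannot lie in $\mathscr{B}M_3$.

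You instead isolate a structural lemma: for a finite (indeed well-founded) poset, $\Box\Diamond U=R_{U\cap\mathrm{Min}}$, so the regular opens of the downset topology form a Boolean algebra isomorphic to $\mathcal{P}(\mathrm{Min})$. After that, everything reduces to identifying $\mathrm{Min}(X_{M_3})$ and the blocks ${\Downarrow}(p,0)\cap\mathrm{Min}$, and properness becomes the count $8<64$. Your approach has several advantages:
\begin{itemize}
\item it avoids computing negations altogether;
\item it yields the paper's subsequent remark ($6$ atoms, $64$ elements) for free;
\item it would equally shorten the $N_5$ computation (three minimal points, hence $\mathbf{B}N_5$ has $8$ elements);
\item your density witness $R_{\{(a,b)\}}$ is precisely the paper's $\{(a,b)\}$.
\end{itemize}
The paper's approach is more elementary and exhibits the witness concretely, but it needs a fresh case analysis for each example.

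One small omission: to conclude that $\mathrm{Min}$ is exactly the six pairs of distinct atoms, you only show that those six are minimal. You should also note that the remaining seven points are not minimal, e.g.\ $(p,q)\sqsubseteq(p,0)$ and $(p,y)\sqsubseteq(1,y)$ for an atom $p\neq y$, though this is already visible in the figure.
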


\begin{proof} We show that $\{(a,b)\} \in \mathbf{B} M_3 \setminus \mathscr{B}M_3$. First, we show that $\{(a,b)\} \in \mathbf{B} M_3$. For this, it suffices to show that $\{(a,b)\}=\Box\Diamond\mathord{\Downarrow}(a,b)$. To see this, consider Figure \ref{M3fig}, which shows both $M_3$ (left) and $X_{M_3}$ (right). Observe that $\{(a,b)\}$ is a $\sqsubseteq$-downset. To see that it is a $\Box\Diamond$-fixpoint, consider any $(x,y)\not\in \{(a,b)\}$, so $x\neq a$ or $y\neq b$. We will show that in either case there is an $(x',y')\sqsubseteq (x,y)$ such that for all $(x'',y'')\sqsubseteq (x',y')$, we have $(x'',y'')\not\in \{(a,b)\}$. If $x\neq a$, then either $x=1$ or $x\in \{b,c\}$. If $x=1$, then let $(x',y')=(b,y)$ if $y\neq b$ and $(x',y')=(c,b)$ otherwise.  On the other hand, if $x\in \{b,c\}$, then let $(x',y')=(x,y)$. Now assume $x=a$, so $y\neq b$. Then $y=0$ or $y=c$. In either case, let $(x',y')=(a,c)$.

Towards showing that $\{(a,b)\}\not\in \mathscr{B}M_3$, we first claim that $e(a) = \{(a, 0), (a, b), (a, c)\}$, where $e$ is the embedding of $M_3$ into $\mathbf{B}M_3$ used in the proofs of Theorems \ref{FMS} and \ref{FD}, so $e(a)=\Box\Diamond\mathord{\Downarrow}(a,0)$. Observe that $\mathord{\Downarrow}(a,0)= \{(a, 0), (a, b), (a, c)\}$. We claim that $\mathord{\Downarrow}(a,0)$ is a $\Box\Diamond$-fixpoint. Consider any ${(x,y)\not\in \mathord{\Downarrow}(a,0)}$, so $x=1$ or $x\in \{b,c\}$. Then by reasoning analogous to that in the previous paragraph, there is an $(x',y')\sqsubseteq (x,y)$ such that for all $(x'',y'')\sqsubseteq (x',y')$, we have $(x'',y'')\not\in \mathord{\Downarrow}(a,0)$. 

Next, we show that $\neg e(a)=\{(b, 0), (b, a), (b, c), (c, 0), (c, a), (c, b), (1, a)\}$. Call the right-hand side $Y$. Clearly, for all $(x,y)\in Y$, $\mathord{\Downarrow}(x,y)\cap e(a)=\varnothing$, so $(x,y)\in \neg e(a)$. Also observe that \[X_{M_3}\setminus Y=\{(a, 0), (a, b), (a, c),(1,b), (1,c), (1,0)\},\] and for every $(x,y)\in X_{M_3}\setminus Y$, there is some $(x',y')\in e(a)$ with $(x',y')\sqsubseteq (x,y)$, so $(X_{M_3}\setminus Y)\cap \neg e(a)=\varnothing$. This completes the proof that $\neg e(a)=Y$.

By symmetric reasoning for $b$ and $c$, we can compute the $e$-images of all elements of $M_3\setminus \{0,1\}$ and their negations:

\begin{center}
\begin{minipage}{2.25in}
\begin{itemize}
    \item[] $e(a) = \{(a, 0), (a, b), (a, c)\}$;
    \item[] $e(b) = \{(b,0), (b,a), (b,c)\}$;
    \item[] $e(c) = \{(c, 0),(c, a),(c, b)\}$;
    \end{itemize}\end{minipage}\begin{minipage}{3.75in}\begin{itemize}
    \item[] $\neg e(a)=\{(b, 0), (b, a), (b, c), (c, 0), (c, a), (c, b), (1, a)\}$;
    \item[] $\neg e(b) = \{(a, 0), (a, b), (a, c), (c, 0), (c, a), (c, b), (1, b)\}$;
    \item[] $\neg e(c) = \{(a, 0), (a, b), (a, c), (b, 0), (b, a), (b, c), (1, c)\}$.
\end{itemize}
\end{minipage}
\end{center}
\noindent Observe that $e(a)\subseteq \neg e(b)\cap \neg e(c)$, $e(b)\subseteq \neg e(a)\cap \neg e(c)$, $e(c)\subseteq \neg e(a)\cap \neg e(b)$, and $\neg e(a)\cap \neg e(b)\cap \neg e(c) =\varnothing$. Since every element of $\mathscr{B}M_3$ is a join of meets, where each meet includes either $e(x)$ or $\neg e(x)$ for $x\in \{a,b,c\}$, each atom of $\mathscr{B}M_3$ is such a meet. It follows from the inclusions noted before that $e(a)$, $e(b)$, and $e(c)$ are the only atoms.\footnote{Thus, $\mathscr{B}M_3$ is the 8-element Boolean algebra consisting of the $6$ elements in the bulleted list plus $\varnothing$ and $X_{M_3}$.} Then since $\{(a,b)\}$ is a proper subset of $e(a)$, it follows that $\{(a,b)\}\not\in \mathscr{B}M_3$.\end{proof}

\begin{remark} $\mathbf{B}M_3$ has 6 atoms, namely the six points of the ``snowflake'' in Figure~\ref{M3fig}: $\{(a,b)\}$, $\{(a,c)\}$, $\{(b,a)\}$, $\{(b,c)\}$, $\{(c,a)\}$, and $\{(c,b)\}$. Thus,  $\mathbf{B}M_3$ has $2^6=64$ elements.\end{remark}

Since the MacNeille completion of a finite Boolean algebra is (isomorphic to) itself, as an immediate consequence of Fact \ref{M3Fact} we obtain the following.

\begin{proposition}\label{NotMacneille} $\mathbf{B}M_3$ is not isomorphic to the MacNeille completion of $\mathscr{B}M_3$.\end{proposition}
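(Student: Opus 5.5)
The plan is to argue by cardinality, using the fact that the MacNeille completion of a finite Boolean algebra is that algebra itself, up to isomorphism.

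First, $\mathscr{B}M_3$ is finite. It is the Boolean subalgebra of $\mathbf{B}M_3$ generated by the five-element set $e[M_3]$, and a Boolean algebra generated by finitely many elements is finite. Indeed, the proof of Fact~\ref{M3Fact} shows it has exactly three atoms, $e(a)$, $e(b)$, $e(c)$, so it has $8$ elements.

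Next, I would identify its MacNeille completion. I would use the characterization recalled above: a complete Boolean algebra $C$ together with a Boolean embedding $h$ whose image is dense in $C$. Take $C=\mathscr{B}M_3$ itself with $h$ the identity. This $C$ is complete because it is finite, and the identity image is trivially dense. By uniqueness up to isomorphism, $\mathfrak{M}\mathscr{B}M_3\cong\mathscr{B}M_3$, which has $8$ elements.

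Finally, compare cardinalities. By Fact~\ref{M3Fact}, $\mathscr{B}M_3$ is a proper subalgebra of the finite algebra $\mathbf{B}M_3$, since $X_{M_3}$ is finite. Hence $|\mathscr{B}M_3|<|\mathbf{B}M_3|$. (Alternatively, by the subsequent Remark, $|\mathbf{B}M_3|=64\neq 8$.) So $\mathbf{B}M_3$ cannot be isomorphic to $\mathfrak{M}\mathscr{B}M_3$.

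There is a subtlety in reading the statement. It is not only that the abstract algebras fail to be isomorphic; the failure of density is what matters in the paper's sense. The singleton $\{(a,b)\}$ is a nonzero element of $\mathbf{B}M_3$, and the only nonzero element of $\mathscr{B}M_3$ that could lie beneath it would have to be an atom strictly smaller than $e(a)$. No such atom exists, so the density required in the argument of Proposition~\ref{DLMacNeille} fails here. No step seems a real obstacle; the only care needed is invoking uniqueness of the MacNeille completion for finite algebras.
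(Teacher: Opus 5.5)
Your proposal is correct and follows the paper's argument: the MacNeille completion of the finite algebra $\mathscr{B}M_3$ is $\mathscr{B}M_3$ itself, and Fact~\ref{M3Fact} makes it a proper subalgebra of the finite algebra $\mathbf{B}M_3$, so the two cannot be isomorphic. Your closing remark about the failure of density below $\{(a,b)\}$ is a correct but optional supplement; the cardinality comparison alone settles the statement.
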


 On the other hand, there are some non-distributive lattices $L$ such that $\mathbf{B}L$ is isomorphic to the MacNeille completion of $\mathscr{B}L$. An example is $N_5$, shown on the left of Figure~\ref{N5fig}, for which $\mathbf{B}N_5$ is generated by $e[N_5]$.

 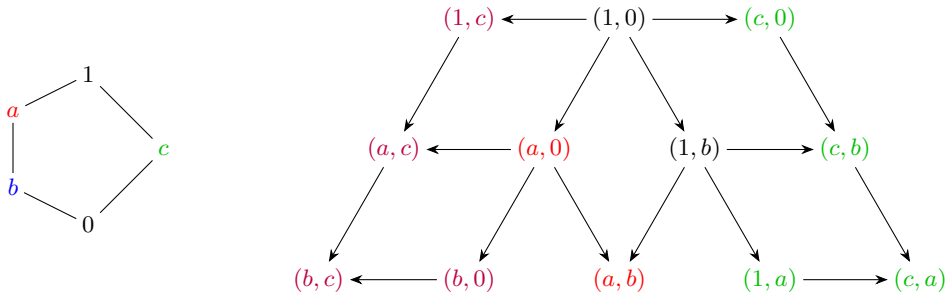
\begin{figure}[h]
\begin{center}
\begin{minipage}{1.5in}
\begin{tikzpicture}[
    every node/.style={inner sep=2pt, font=\small},
    every edge/.style={draw, ->, >=Stealth}
]

\node (n1) at (4,0) {{$1$}};
\node (nx) at (3,-.5) {{\textcolor{red}{$a$}}};
\node (ny) at (3,-1.5) {{\textcolor{blue}{$b$}}};
\node (nz) at (5,-1) {{\textcolor{medgreen}{$c$}}};
\node (n0) at (4,-2) {{$0$}};

\path (nx) edge[-] node {{}} (n1);
\path (nx) edge[-] node {{}} (ny);
\path (n1) edge[-] node {{}} (nz);
\path (ny) edge[-] node {{}} (n0);
\path (nz) edge[-] node {{}} (n0);

\end{tikzpicture}\end{minipage}\begin{minipage}{3.25in}\begin{tikzpicture}[
    every node/.style={inner sep=2pt, font=\small},
    every edge/.style={draw, ->, >=Stealth}
]
\node (10) at (0,0) {$(1,0)$};
\node (1a) at (2,-3.464) {\textcolor{medgreen}{$(1,a)$}};
\node (1b) at (-60:2)  {$(1,b)$};
\node (1c) at (180:2) {\textcolor{purple}{$(1,c)$}};
\node (a0) at (-120:2) {\textcolor{red}{$(a,0)$}};
\node (b0) at (-2,-3.464)  {\textcolor{purple}{$(b,0)$}};
\node (c0) at (0:2)   {\textcolor{medgreen}{$(c,0)$}};
\node (ab) at (0, -3.464)    {\textcolor{red}{$(a,b)$}};
\node (ac) at (-3, -1.732)  {\textcolor{purple}{$(a,c)$}};
\node (bc) at (-4,-3.464)  {\textcolor{purple}{$(b,c)$}};
\node (ca) at (4,-3.464)   {\textcolor{medgreen}{$(c,a)$}};
\node (cb) at (3, -1.732)   {\textcolor{medgreen}{$(c,b)$}};
 
\path (1a) edge (ca);
 
\path (1b) edge (1a);
\path (1b) edge (cb);
\path (1b) edge (ab);
 
\path (1c) edge (ac);
 
\path (10) edge (1b);
\path (10) edge (1c);
\path (10) edge (a0);
\path (10) edge (c0);
 
\path (a0) edge (b0);
\path (a0) edge (ab);
\path (a0) edge (ac);
 
\path (ac) edge (bc);
 
\path (b0) edge (bc);
 
\path (cb) edge (ca);
 
\path (c0) edge (cb);
\end{tikzpicture}

\end{minipage}
\end{center}
\caption{the lattice $N_5$ (left) and $X_{N_5}$ (right). An arrow from $(x,y)$ to $(x',y')$ indicates that $(x',y')\sqsubseteq (x,y)$. The color of the nodes indicates the embedding $e:N_5\to \mathbf{B}N_5$, where purple pairs are in the $e$-image of both $a$ and $b$.}\label{N5fig}
\end{figure}

\begin{fact}
$\mathbf{B} N_5=\mathscr{B} N_5$.
\end{fact}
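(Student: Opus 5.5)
The plan is to compute $\mathbf{B}N_5$ explicitly, using the finiteness of $X_{N_5}$. I will then check that each of its atoms already lies in the Boolean subalgebra generated by $e[N_5]$. Since $\mathscr{B}N_5\subseteq \mathbf{B}N_5$ by definition, it suffices to show that every atom of $\mathbf{B}N_5$ belongs to $\mathscr{B}N_5$. In a finite Boolean algebra every element is a join of atoms, so this gives $\mathbf{B}N_5\subseteq\mathscr{B}N_5$.

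\textbf{Step 1: the minimal elements of $X_{N_5}$.} In $N_5$ we have $0<b<a<1$, and $c$ is incomparable with $a$ and $b$. Listing all pairs $(x,y)$ with $x\not\leq y$ gives the twelve points in Figure~\ref{N5fig}. I will check directly that exactly three of them are $\sqsubseteq$-minimal: $(a,b)$, $(b,c)$ and $(c,a)$. For example, for $(x,y)\sqsubseteq(a,b)$ we need $x\leq a$, $y\geq b$ and $x\not\leq y$. The option $x=b$ is impossible because then $x\leq y$, which forces $(x,y)=(a,b)$. The other two cases are similar. I will also check that every point of $X_{N_5}$ lies above at least one of these three minimal points, which can be read off the figure.

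\textbf{Step 2: regular opens of a finite poset.} For a finite poset $X$ with downset topology, every $\mathord{\Downarrow}x$ contains a minimal element. Moreover, $x\in\Diamond U$ for every $x$ below some element iff $U$ contains a minimal element below it. Writing $\mathrm{Min}$ for the set of minimal elements, this yields
\[\Box\Diamond U=\{x\in X\mid \text{every minimal element below } x \text{ lies in } U\}.\]
Hence the regular opens correspond bijectively to subsets of $\mathrm{Min}$, via $U\mapsto U\cap \mathrm{Min}$. This also matches the atom count in the remark on $M_3$. So $\mathbf{B}N_5$ is the 8-element Boolean algebra with three atoms $A_{(a,b)}, A_{(b,c)}, A_{(c,a)}$. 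Here $A_m$ is the set of points whose only minimal element below them is $m$.

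\textbf{Step 3: locate the atoms in $\mathscr{B}N_5$.} By Step 2, $e(t)=\Box\Diamond\mathord{\Downarrow}(t,0)$ is determined by which minimal elements lie below $(t,0)$. A minimal $(m_1,m_2)$ lies below $(t,0)$ iff $m_1\leq t$. This gives:
\begin{itemize}
\item $e(a)\cap\mathrm{Min}=\{(a,b),(b,c)\}$;
\item $e(b)\cap\mathrm{Min}=\{(b,c)\}$;
\item $e(c)\cap\mathrm{Min}=\{(c,a)\}$.
\end{itemize}
So $e(b)=A_{(b,c)}$ and $e(c)=A_{(c,a)}$ are atoms, and $e(a)\wedge\neg e(b)=A_{(a,b)}$ is the remaining atom. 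Hence all atoms lie in $\mathscr{B}N_5$, and therefore $\mathbf{B}N_5=\mathscr{B}N_5$.

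The only real work is the bookkeeping in Step 1, namely making sure no minimal element is missed. Step 2 is a general and easy fact about finite posets, but I would state it carefully since the rest of the argument rests on it.
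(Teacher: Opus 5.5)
Your proof is correct, and it reaches the result by a somewhat different route than the paper.

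\textbf{The paper's route.} The paper lists the eight sets $S_1,\dots,S_8$ and checks by direct calculation that they are $\Box\Diamond$-fixpoints. It rules out any others by a seven-case analysis indexed by $\mathrm{min}\,U$. It then concludes by counting: $e[N_5]$ has five distinct elements, every proper subalgebra of an 8-element Boolean algebra has at most four, so the generated subalgebra is everything.

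\textbf{Your route.} You prove once, for any finite poset $X$, that $\Box\Diamond U=\{x\mid \text{every minimal element below }x\text{ lies in }U\}$. Hence $U\mapsto U\cap\mathrm{Min}$ is an isomorphism from the regular opens onto the powerset of $\mathrm{Min}$. You should state the inverse $T\mapsto\{x\mid \mathrm{Min}\cap{\Downarrow}x\subseteq T\}$ explicitly to justify surjectivity. The paper's case analysis is exactly this bijection checked by hand for $X_{N_5}$, since $\mathrm{min}\,U=U\cap\mathrm{Min}$ for a downset $U$ in a finite poset. You then exhibit the three atoms directly as $e(b)$, $e(c)$ and $e(a)\wedge\neg e(b)$, instead of counting.

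\textbf{Comparison.} Your route is shorter and reusable. It also explains at once why the atoms of $\mathbf{B}M_3$ are the six snowflake singletons, and why $\{(a,b)\}$ is regular open in Fact~\ref{M3Fact}. The paper's route has the side benefit of writing out all eight regular opens explicitly, which matches the coloring in the figure.

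\textbf{One small repair.} The sentence justifying your displayed formula for $\Box\Diamond U$ is garbled as written. A clean argument runs as follows. If $x\in\Box\Diamond U$ and $m\sqsubseteq x$ is minimal, then ${\Downarrow}m=\{m\}$ must meet $U$, so $m\in U$. Conversely, any $y\sqsubseteq x$ has a minimal $m\sqsubseteq y$, and that $m$ lies in $U\cap{\Downarrow}y$. Also, your plan to check that every point lies above some minimal point is unnecessary, since this is automatic in a finite poset.
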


\begin{proof} 
The poset $X_{N_5}$ is shown on the right of Figure~\ref{N5fig}. Direct calculation shows that each of the following sets is a $\Box\Diamond$-fixpoint of $\mathcal{D}(X_{N_5})$:

\begin{center}
\begin{minipage}{2.75in}
\begin{enumerate}
    \item[] $S_1 = \varnothing$;
    \item[] $S_2 = \{(a, b)\}$;
    \item[] $S_3 = \{(a, c), (b, 0), (b, c), (1, c)\}$;
    \item[] $S_4 = \{(c, a), (c, 0), (c, b), (1, a)\}$;
\end{enumerate}    
\end{minipage}\begin{minipage}{4in}
    \begin{enumerate}
    \item[] $S_5 = \{(a, b), (c, 0), (c, a), (c, b), (1, a), (1, b)\}$;
    \item[] $S_6 = \{(a, 0), (a, b), (a, c), (b, 0), (b, c), (1, c)\}$;
    \item[] $S_7 = \{(a, c), (b, 0), (b, c), (c, 0), (c, a), (c, b), (1, a), (1, c)\}$;
    \item[] $S_8 = X_{N_5}$.
    \end{enumerate}
\end{minipage}
\end{center}

\noindent Where $e$ is the embedding of $N_5$ into $\mathbf{B}N_5$ used in the proofs of Theorems \ref{FMS} and \ref{FD}, we have \[e(a)=S_6, \ e(b)=S_3, \ \mbox{and }e(c)=S_4.\] We claim that there are no other $\Box\Diamond$-fixpoints besides $S_1,\dots,S_8$. Each $\Box\Diamond$-fixpoint must be a $\sqsubseteq$-downset, so either it is $\varnothing$ or it must contain at least one of the minimal points, which are $(b,c)$, $(a,b)$, and $(c,a)$. Considering the minimal points from left to right, we reason as follows:
\begin{enumerate}
\item[] \hypertarget{case1}{Case 1}: $\mathrm{min}U=\{(b,c)\}$. Given that $(b,c)\in U$, we have $S_3\subseteq U$, since for every $(x,y)\in S_3$ and $(x',y')\sqsubseteq (x,y)$, we have $(b,c)\sqsubseteq (x',y')$. Moreover, given that $(a,b)\not\in U$ and $(c,a)\not\in U$, we have $U\subseteq S_3$, since for any $(x,y)\in X_{N_5}\setminus S_3$, we have $(a,b)\sqsubseteq (x,y)$ or $(c,a)\sqsubseteq (x,y)$. Therefore, $U=S_3$.
\item[] Case 2: $\mathrm{min}U=\{(a,b)\}$. Then $U=S_2$, since for any $(x,y)\in X_{N_5}\setminus S_2$, we have either $(b,c)\sqsubseteq (x,y)$ or $(c,a)\sqsubseteq (x,y)$.
\item[] \hypertarget{case3}{Case 3}: $\mathrm{min}U=\{(c,a)\}$. Then by reasoning symmetric to that in \hyperlink{case1}{Case 1}, $U=S_4$.

\item[] \hypertarget{case4}{Case 4}: $\mathrm{min}U=\{(b,c),(a,b)\}$. Given that $(b,c)\in U$ and $(a,b)\in U$, we have $(a,0)\in U$, since for any $(x',y')\sqsubseteq (a,0)$, we have either $(b,c)\sqsubseteq (x',y')$ or $(a,b)\sqsubseteq (x',y')$; it follows that $S_6\subseteq U$. Moreover, given that $(c,a)\not\in U$, we have $U\subseteq S_6$, since for any $(x,y)\in X_{N_5}\setminus S_6$, we have $(c,a)\sqsubseteq (x,y)$. Therefore, $U=S_6$.
\item[] Case 5: $\mathrm{min}U=\{(b,c),(c,a)\}$. Then $S_7=S_3\cup S_4\subseteq U$, by reasoning as in \hyperlink{case1}{Case 1} and \hyperlink{case3}{Case 3}. Moreover, $U\subseteq S_7$, since for every $(x,y)\in X_{N_5}\setminus S_7$, we have $(a,b)\sqsubseteq (x,y)$. Therefore, $U=S_7$.
\item[] Case 6: $\mathrm{min}U=\{(a,b),(c,a)\}$. Then by reasoning symmetric to that in \hyperlink{case4}{Case 4},  $U=S_5$.
\item[] Case 7: $\mathrm{min}U=\{(b,c),(a,b),(c,a)\}$. Then clearly $U=X_{N_5}$.
\end{enumerate}

When ordered by inclusion, $S_1,\dots,S_8$ form the $8$-element Boolean algebra $\mathbf{B} N_5$. Then since $e[N_5]$ has 5 elements, the Boolean subalgebra $\mathscr{B}N_5$ of $\mathbf{B}N_5$ generated by $e[N_5]$ must be $\mathbf{B} N_5$ itself.\end{proof}

As an immediate consequence, we obtain the following.

\begin{proposition}$\mathbf{B} N_5$ is isomorphic to the MacNeille completion of $\mathscr{B} N_5$.
\end{proposition}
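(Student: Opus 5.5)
The claim follows directly from the preceding Fact, which says $\mathbf{B}N_5=\mathscr{B}N_5$, combined with the characterization of MacNeille completions recalled before Proposition~\ref{DLMacNeille}. That characterization says: if $B$ is a subalgebra of a complete Boolean algebra $C$, then $C$ is isomorphic to $\mathfrak{M}B$ precisely when $B$ is dense in $C$. So I would take $B=\mathscr{B}N_5$ and $C=\mathbf{B}N_5$, and aim to show that $\mathscr{B}N_5$ is dense in $\mathbf{B}N_5$.

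By the Fact, $\mathscr{B}N_5$ equals $\mathbf{B}N_5$, so it is a subalgebra of the complete Boolean algebra $\mathbf{B}N_5$ (Tarski's result), and the inclusion is a Boolean embedding. Density is then trivial: for any nonzero $x\in\mathbf{B}N_5$, take $y=x$ itself, which lies in $\mathscr{B}N_5$ and satisfies $0\neq y\le x$. Hence $\mathbf{B}N_5$ is isomorphic to the MacNeille completion of $\mathscr{B}N_5$.

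An alternative route avoids density altogether. $\mathscr{B}N_5$ is the finite $8$-element Boolean algebra $S_1,\dots,S_8$, and a finite Boolean algebra is complete. The MacNeille completion of a complete lattice is (isomorphic to) the lattice itself, which is the same observation the paper uses before Proposition~\ref{NotMacneille}. So $\mathfrak{M}\mathscr{B}N_5\cong\mathscr{B}N_5=\mathbf{B}N_5$.

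There is no real obstacle here. All the substantive work, namely the enumeration showing that the regular opens of $X_{N_5}$ are exactly $S_1,\dots,S_8$ and that $e[N_5]$ generates them, is already contained in the preceding Fact. The only point to check is that the density criterion is being applied with $\mathscr{B}N_5$ as a subalgebra of $\mathbf{B}N_5$, and that holds by construction.
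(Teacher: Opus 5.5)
Your proposal is correct and matches the paper, which obtains this proposition as an immediate consequence of the preceding Fact that $\mathbf{B}N_5=\mathscr{B}N_5$. Either of your justifications fills in that one-line step: $\mathscr{B}N_5$ is trivially dense in itself, or a finite (hence complete) Boolean algebra is its own MacNeille completion.
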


The contrast between the foregoing propositions suggests the following problem.

\begin{problem}\label{2ndProblem} Characterize the class of lattices $L$ such that $\mathbf{B}L$ is isomorphic to the MacNeille completion of $\mathscr{B}L$.
\end{problem}

\subsection*{Acknowledgements}

We thank Mamuka Jibladze for helpful discussions that led to this paper.

\bibliographystyle{plainnat}
\bibliography{funayama}

\end{document}